\def\theequation{\@arabic\c@equation}
\newcommand{\gaD}{\gamma_{{}_D}}
\newcommand{\gr}{{\text{graph}}}
\newcommand{\gaN}{\gamma_{{}_N}}
\newcommand{\Sp}{\operatorname{Spec}}
\newcommand{\e}{\hbox{\rm e}}
\newcommand{\bbN}{{\mathbb{N}}}
\newcommand{\bbR}{{\mathbb{R}}}
\newcommand{\C}{{\mathbb{C}}}
\newcommand{\bbC}{{\mathbb{C}}}
\newcommand{\cB}{{\mathcal B}}
\newcommand{\cE}{{\mathcal E}}
\newcommand{\cF}{{\mathcal F}}
\newcommand{\cG}{{\mathcal G}}
\newcommand{\cI}{{\mathcal I}}
\newcommand{\cJ}{{\mathcal J}}
\newcommand{\cK}{{\mathcal K}}
\newcommand{\cL}{{\mathcal L}}
\newcommand{\cP}{{\mathcal P}}
\newcommand{\cQ}{{\mathfrak m}}
\newcommand{\cV}{{\mathcal V}}
\newcommand{\cX}{{\mathcal X}}
\newcommand{\cZ}{{\mathcal Z}}
\newcommand{\bfi}{{\bf i}}
\newcommand{\no}{\nonumber}
\newcommand{\lb}{\label}
\newcommand{\wti}{\widetilde  }
\newcommand{\rank}{\text{\rm{rank}}}
\newcommand{\ran}{\text{\rm{ran}}}
\newcommand{\bi}{\bibitem}
\newcommand{\hatt}{\widehat}
\newcommand{\mi}{\operatorname{Mas}}
\newcommand{\mo}{\operatorname{Mor}}
\numberwithin{equation}{section}
\renewcommand{\det}{\operatorname{det}}
\newcommand{\dom}{\operatorname{dom}}
\newcommand{\tr}{\operatorname{Tr}}
\newcommand{\sign}{\operatorname{sign}}
\newcommand{\spec}{\operatorname{Spec}}
\newcommand{\spflow}{\operatorname{SpFlow}}
\renewcommand{\ker}{\operatorname{ker}}
\newcommand{\diag}{\operatorname{diag}}
\newtheorem{theorem}{Theorem}[section]
\newtheorem{hypothesis}[theorem]{Hypothesis}
\newtheorem{proposition}[theorem]{Proposition}
\theoremstyle{definition}
\newtheorem{definition}[theorem]{Definition}
\newtheorem{remark}[theorem]{Remark}
\newcommand{\dL}{\prescript{d\!}{}L}
\begin{document} 
\begin{abstract}
We show that the spectral flow of a one-parameter family of Schr\"odinger operators on a metric graph is equal to the Maslov index of a path of Lagrangian subspaces describing the vertex conditions. In addition, we derive an Hadamard-type formula for the derivatives of the eigenvalue curves via the Maslov crossing form.
\end{abstract}

\allowdisplaybreaks

\title[Maslov Index]{An index theorem for Schr\"odinger operators on metric graphs}

\author[Y. Latushkin]{Yuri Latushkin}
\address{Department of Mathematics,
The University of Missouri, Columbia, MO 65211, USA}
\email{latushkiny@missouri.edu}
\author[S. Sukhtaiev]{Selim Sukhtaiev	}
\address{Department of Mathematics,
Rice University, Houston, TX 77005, USA}
\email{sukhtaiev@rice.edu}
\date{\today}
\keywords{Eigenvalues, Lagrangian Grassmanian, Maslov index}
\thanks{Supported by the NSF grant  DMS-1710989, by the AMS-Simons Travel Grant, by the Research Board and Research Council of the University of Missouri, and by the Simons Foundation.}
\maketitle

\section{Introduction}\label{sec2}

In this paper we establish a relation between the spectral flow of a one-parameter family of self-adjoint Schr\"odinger operators on a compact metric graph and the Maslov index of a path of Lagrangian subspaces. The spectral flow of a one-parameter family of self-adjoint Fredholm operators is the net number of eigenvalues passing through zero in the positive direction \cite{APS}, \cite{BZ1}. The Maslov index is a topological invariant counting the number of intersections of a curve in the Lagrangian Grassmanian with a fixed cycle \cite{arnold67}, \cite{Arn85}, \cite{BZ1}. It  is a fundamental topological fact that these two quantities are closely related. This relation has been extensively studied in the context of Sturm oscillation theory for systems of differential equations and for multidimensional differential operators, cf., e.g.,  \cite{BbF95}, \cite{BW93}, \cite{BZ3}, \cite{BZ1}, \cite{BZ2}, \cite{CJLS}, \cite{CJM1}, \cite{CJM2}, \cite{DJ11}, \cite{HS18}, \cite{HLS}, \cite{LS17}, \cite{LS1}, \cite{LSS}. In particular, it was recently used to derive an explicit formula for the nodal deficiency of the Dirichlet eigenfunctions \cite{CJM2}, to provide a geometric interpretation of celebrated L.~Friedlander's inequalities \cite{Fr} for the Dirichlet and Neumann eigenvalues \cite{CJM2}, to obtain the oscillation results for Schr\"odinger operators with matrix-valued potentials \cite{HLS}, \cite{HS18}, and to establish instability of pulses in systems of gradient reaction-diffusion equations \cite{BCJLMS}. 

In this work, we consider the Schr\"odinger operator $H=-\frac{d^2}{dx^2}+q$ on a compact metric graph $\Gamma$ subject to the vertex conditions
\begin{equation}\no
Af+Bf'=0,\  f\in\dom(H),
\end{equation}
where $A, B$ are the boundary matrices facilitating self-adjointness of $H$ in $L^2(\Gamma)$. For instance, the Dirichlet boundary condition corresponds to $A=I$, $B=0$, the Robin condition is defined by $A=A^*$, $B=I$. The spectrum of $H$ is discrete and bounded from below, in particular, it accumulates only at $+\infty$. For a family of Schr\"odinger operators $\{H_t\}_{t=0}^1$ corresponding to the matrices of boundary conditions $\{(A_t, B_t)\}_{t=0}^1$, we prove that the spectral flow through zero is equal to the Maslov index of the path of finite dimensional Lagrangian subspaces $\cL_t:=\ran(-B^*_t, A^*_t)$, $t\in[0,1]$,  that is, we derive the formula
\begin{equation}\no
\spflow(\{H_t\}_{t=0}^1)=\mi(\{\cK,\cL_t\}_{t=0}^1),
\end{equation}
where $\cK$ is the Lagrangian subspace formed by the Cauchy data of all solutions to the equation $-f''+qf=0$, see Theorem \ref{thm3.3}. The Maslov index, originally defined as an intersection number, is given by the signature of the Maslov form, a finite dimensional, non-degenerate symmetric form on $\cK\cap\cL_t$ (whenever this intersection is not empty), cf. Theorem \ref{masform}. The signature of the Maslov form is closely related to monotonicity of the eigenvalues passing through zero. An analytical tool furnishing such a connection is given by the Hadamard formula for the derivative of an eigenvalue with respect to the parameter. We establish this formula for Schr\"odinger operators with varying boundary conditions, see Theorem \ref{prop3.4}. Finally, we revisit the classical eigenvalue interlacing inequalities, cf., e.g., \cite[Theorem 3.1.8]{BK}, and derive their modification using the spectral flow formula. 

{\bf Notation.} We denote by $I_n$ the $n\times n$ identity matrix.  For an $n\times m$ matrix $A=(a_{ij})_{i=1,j=1}^{n,m}$
and a $k\times\ell$ matrix $B=(b_{ij})_{i=1,j=1}^{k,\ell}$, we denote by
$A\otimes B$ the Kronecker product, that is, the $nk\times m\ell$ matrix composed of $k\times\ell$ blocks $a_{ij}B$, $i=1,\dots n$, $j=1,\dots m$. We let $\langle\cdot\,,\cdot\rangle_{\cX}$ denote the complex scalar product in the Hilbert space $\cX$. 
We denote by $\cB(\cX)$ the set of linear bounded operators and by $\spec(T)$ the spectrum of an operator $T$ on a Hilbert space $\cX$. Given a subspace $S\subset \cX$ we denote $\prescript{d}{}S:= S\oplus S$. We use notation $J$ for the following $2\times 2$ matrix,
\begin{equation}J:=\left[\begin{matrix}
0& 1\\
-1& 0\\
\end{matrix}\right].
\end{equation}
\section{Preliminaries}
\subsection{Schr\"odinger operators on graphs with fixed edge lengths.} We begin by discussing  differential operators on metric graphs. To set the stage, let us fix a discrete graph $\cG=(\cV,\cE)$ where $\cV$ and $\cE$ denote the set of vertices and edges respectively. We assume that $\cG$ consists of finite number of vertices,  $|\cV|$, and finite number of edges,  $|\cE|$ . Each edge $e\in\cE$  is assigned positive length $\ell_{e}\in(0,\infty)$ and some direction. The corresponding metric graph is denoted by $\Gamma$. The boundary $\partial\Gamma$ of the metric graph is defined by
\begin{equation}
\partial\Gamma:=\cup_{e\in\cE} \{a_e,b_e\}, 	
\end{equation}
where $a_e, b_e$ denote the end points of edge $e$.  It is convenient to treat $2|\cE|$ dimensional vectors as a space of functions of the boundary $\partial\Gamma$, in particular,
\begin{equation}\label{vv1}
L^2(\partial\Gamma)\cong \bbC^{2|\cE|},
\end{equation}
where the space $L^2(\partial\Gamma)=\bigoplus_{e\in\cE}\left( L^2(\{a_e\})\oplus L^2(\{b_e\})\right)$  corresponds to the discrete Dirac measure with support $\cup_{e\in \cE} \{a_e, b_e\}$. In addition to the space of functions on the boundary we consider the Sobolev spaces of functions on the graph $\Gamma$,
\begin{align}
&L^2(\Gamma):=\bigoplus_{e\in\cE}L^2(e),\  \hatt{H}^k(\Gamma):=\bigoplus_{e\in\cE}H^k(e),\ k\in\bbN\no,
\end{align}  
where $H^k(e)$ is the standard $L^2$ based Sobolev space of order $k\in \bbN$.  
As in the case of compact manifolds with boundaries, the spaces $\hatt{L}^2(\Gamma)$ and $L^2(\partial \Gamma)$ are related via the trace maps. We define the Dirichlet and Neumann trace operators  by the formulas
\begin{align}
&\gamma_D: \hatt{H}^2(\Gamma)\rightarrow L^2(\partial \Gamma),
\ \gamma_Df:=f|_{\partial \Gamma}, f\in \hatt{H}^2(\Gamma),\label{vv2}\\
&\gamma_N: \hatt{H}^2(\Gamma)\rightarrow L^2(\partial \Gamma),
\ \gamma_N f:=\partial_{n}f|_{\partial \Gamma}, f\in \hatt{H}^2(\Gamma),\label{vv3}
\end{align}
where $\partial_{n} f$ denotes the derivative of $f$ taken in the inward direction. The trace operator is a bounded, linear operator given by
\begin{equation}\lb{2.4new}
\tr:=
\left[\begin{matrix}
\gamma_D\\
\gamma_N
\end{matrix}\right],\, \tr: \hatt{H}^2(\Gamma)\rightarrow L^2(\partial \Gamma)\oplus L^2(\partial \Gamma)\cong \bbC^{4|\cE|}.
\end{equation}
The Sobolev space of functions vanishing on the boundary $\partial\Gamma$ together with their derivatives is denoted by
\begin{equation}\no
H^2_0(\Gamma):=\left\{f\in \hatt{H}^2(\Gamma): \tr f=0\right\}.
\end{equation}
Using our notation for trace maps, Green's formula can written as follows
\begin{align}\label{vv5}
\int_{\Gamma} f''\overline{g}-f\overline{g''}
&=-\int_{\partial\Gamma}
\partial_{n}f\overline{g}-f\overline{\partial_{n}g} \\
&= -\langle [J\otimes I_{2|\cE|}]\tr f, \tr g\rangle_{\bbC^{4|\cE|}},\  f,g\in  \hatt{H}^2(\Gamma).\no
\end{align}
The right-hand side of  Green's identity defines a symplectic form 
\begin{align} 
\label{vv9}
&\omega:\  \dL^2(\partial \Gamma)\times\,\dL^2(\partial
\Gamma)\rightarrow \bbC, \\
\label{eq:def_omega}
&\omega( (\phi_1, \phi_2), (\psi_1, \psi_2)):=\int_{\partial\Gamma}\phi_2 \overline{\psi_1}-\phi_1\overline{\psi_2},\\
& (\phi_1, \phi_2), (\psi_1, \psi_2)\in \dL^2(\partial\Gamma),\label{vv10}
\end{align}
where $\dL^2(\partial \Gamma):=L^2(\partial \Gamma)\oplus L^2(\partial \Gamma)$.

Next, we introduce the minimal Schr\"odinger operator $H_{min}$ and its adjoint $H_{max}$. To this end, let us fix a bounded real-valued potential $q\in L^{\infty}(\Gamma;\bbR)$. The linear operator 
\begin{equation}\label{b5}
H_{min}:=-\frac{d^2}{dx^2}+q,\quad \dom(H_{min})=\hatt H^2_0(\Gamma),
\end{equation}
is symmetric in $L^2(\Gamma)$. Its adjoint $H_{max}:=H_{min}^*$ is given by the formulas 
\begin{equation}\label{b6}
H_{max}:=-\frac{d^2}{dx^2}+q,\quad \dom(H_{max})=\hatt{H}^2(\Gamma).
\end{equation}
The dificiency indices of $H_{min}$ are finite and equal, that is,
\begin{equation}
0<\dim\ker(H_{max}-\bfi)=\dim\ker(H_{max}+\bfi)<\infty.
\end{equation}
By the standard von-Neumann theory, the self-adjoint extensions of
$H_{min}$ exist and every self-adjoint extension $H$ satisfies
$H_{min}\subset H=H^*\subset H_{max}$. There are various possible
parameterizations of all self-adjoint extensions of the minimal
operator. In this paper we utilize the one stemming from symplectic
geometry \cite{McS}. Namely, we use the fact that the self-adjoint extensions of
the minimal operator are in one-to-one correspondence with the
Lagrangian planes in some symplectic Hilbert space, the fact that goes back to the classical Birman--Vishik--Krein theory \cite{Kr47,Vi}, see also \cite{AS80,BK,BbF95,Ha00,LS1,Pa}. 

A subspace $\cL\subset{}^dL^2(\partial\Gamma):=L^2(\partial\Gamma)\oplus L^2(\partial\Gamma)$ is called {\it Lagrangian} if $\cL$ is equal to its $\omega-$annihilator, i.e., 
\begin{equation}\no
\cL=\cL^{\circ}:=\{x\in L^2(\partial\Gamma): \omega(x,y)=0 \text{\ for all\ }y\in \cL \}.
\end{equation}
The Lagrangian--Grassmannian is the space of Lagrangian planes
\begin{equation}\no
\Lambda(\dL^2(\partial \Gamma)):=\{\cF\subset\dL^2(\partial \Gamma): \cF \ \text{is Lagrangian with respect to \ } \omega \},
\end{equation}
equipped with metric 
\begin{equation}\no
d(\cF_1, \cF_2):=\|P_{\cF_1}-P_{\cF_2}\|_{\cB(\dL^2(\partial \Gamma))},\ \cF_1,\cF_2\in \Lambda(\dL^2(\partial \Gamma)),
\end{equation}
where $P_{\cF}$ denotes the orthogonal projection onto ${\cF}$ in $^dL^2(\partial \Gamma)$.

\begin{proposition} \lb{prop2.1}
	
	i)  \cite{BbF95, Ha00, LS1}
	Assume that $q\in L^{\infty}(\Gamma;\bbR)$. Then the self-adjoint
	extensions of $H_{min}$ $($cf.\ \eqref{b5}$)$ are in one-to-one
	correspondence with the Lagrangian planes  in $\dL^2(\partial \Gamma)$.
	Namely, the following two assertions hold.
	
	1) If $H$ is a self-adjoint extension of $H_{min}$ then 
	\begin{equation}\no
	\cL({H}):=\tr\big({\dom(H)}\big) \text{\   is a Lagrangian plane in\ }\dL^2(\partial \Gamma).
	\end{equation}
	Moreover, the mapping $H\mapsto \cL({H})$ is injective. 
	
	2) Conversely, if $\cL\subset \dL^2(\partial \Gamma)$ is a Lagrangian plane then the operator 
	\begin{align}\label{b7}
	H({\cL}):=-\frac{d^2}{dx^2}+q(x),\ \dom\big(H({\cL})\big)=\{f\in \hatt{H}^2(\Gamma): \tr f\in\cL\},
	\end{align}
	is a self-adjoint extension of $H_{min}$. 
	
	ii) Let $H_{n}, n\geq 0,$ be a sequence of self-adjoint extensions of the operator $H_{min}$ and let $\cL_{n}\subset\dL^2(\partial \Gamma), n\geq 0,$ be the corresponding sequence of Lagrangian planes such that $H_n$ and $\cL_n$  are related to each other as indicated in 1) and 2). Then 
	\begin{equation} \lb{res}
	R(\bfi, H_n)\rightarrow R(\bfi, H_0),\ n\rightarrow\infty, \text{ \ in\  } \cB(L^2(\Gamma)),
	\end{equation}
	$($here $R(\bfi, H_n) $ denotes the resolvent of $H_n$ at $\bfi$ $)$ if and only if
	\begin{equation} \lb{plan}
	\cL_n\rightarrow \cL_0,\ n\rightarrow\infty, \text{ \ in\  }\Lambda(\dL^2(\partial \Gamma)).
	\end{equation}
\end{proposition}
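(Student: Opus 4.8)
The plan is to obtain i) from the symplectic form of von Neumann's extension theory and ii) from an explicit Krein-type resolvent formula together with finite-dimensional linear algebra; the latter is available because $\dL^2(\partial\Gamma)\cong\bbC^{4|\cE|}$ is finite dimensional. Two facts drive i): \textbf{(a)} Green's formula \eqref{vv5}, which upon inserting the definitions of $\gamma_D$, $\gamma_N$, $\omega$ becomes the abstract Lagrange identity
\[
\omega(\tr f,\tr g)=\langle H_{max}f,g\rangle_{L^2(\Gamma)}-\langle f,H_{max}g\rangle_{L^2(\Gamma)},\qquad f,g\in\hatt{H}^2(\Gamma);
\]
and \textbf{(b)} the trace map $\tr:\hatt{H}^2(\Gamma)\to\dL^2(\partial\Gamma)$ is onto with $\ker\tr=\hatt{H}^2_0(\Gamma)=\dom(H_{min})$ — surjective because on each edge Hermite cubic interpolation realizes arbitrary prescribed values of $f$ and $f'$ at the two endpoints, and $\hatt{H}^2_0$ is precisely the subspace annihilated by $\gamma_D$ and $\gamma_N$.

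Granting (a)--(b): for 1), if $H=H^*$ with $H_{min}\subset H\subset H_{max}$, then symmetry of $H$ and (a) give $\cL(H)=\tr(\dom(H))\subset\cL(H)^{\circ}$; conversely, any $x\in\cL(H)^{\circ}$ equals $\tr g$ for some $g\in\hatt{H}^2(\Gamma)$ by surjectivity, and $\omega(\tr g,\tr f)=0$ for all $f\in\dom(H)$ forces $\langle H_{max}g,f\rangle=\langle g,Hf\rangle$, hence $g\in\dom(H^*)=\dom(H)$ and $x\in\cL(H)$; thus $\cL(H)$ is Lagrangian, and $H\mapsto\cL(H)$ is injective because $\dom(H)=\tr^{-1}(\cL(H))$ (as $\ker\tr\subset\dom(H)$) and $H=H_{max}|_{\dom(H)}$. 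For 2), given a Lagrangian $\cL$, the operator $H(\cL)$ of \eqref{b7} is symmetric by (a) and $\cL\subset\cL^{\circ}$; from $H_{min}\subset H(\cL)\subset H_{max}$ we get $H_{min}\subset H(\cL)^*\subset H_{max}$, so for $g\in\dom(H(\cL)^*)$ the identity $\langle H_{max}g,f\rangle=\langle g,H_{max}f\rangle$ holds whenever $\tr f\in\cL$, whence $\omega(\cdot,\tr g)$ vanishes on $\cL$ (surjectivity) and $\tr g\in\cL^{\circ}=\cL$, i.e.\ $g\in\dom(H(\cL))$; thus $H(\cL)=H(\cL)^*$. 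Finally $H(\cL(H))=H$ and $\cL(H(\cL))=\cL$, which completes i).

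For ii) we may write $H_n=H(\cL_n)$ by i). Fix the Dirichlet extension $H_D$ as a reference: it is self-adjoint, so $\bfi\notin\spec(H_D)$, and $\tr(\dom(H_D))=\cL_D:=\{0\}\oplus L^2(\partial\Gamma)$. Put $\cN_\bfi:=\tr\big(\ker(H_{max}-\bfi)\big)$ and let $\cS(\bfi):\cN_\bfi\to\ker(H_{max}-\bfi)$ be the solution map, a bounded isomorphism onto a finite-dimensional subspace of $L^2(\Gamma)$ with $\tr\big(\cS(\bfi)\phi\big)=\phi$ for $\phi\in\cN_\bfi$. Self-adjointness of $H(\cL_n)$ yields $\cN_\bfi\cap\cL_n=\{0\}$ (a solution of $(H_{max}-\bfi)u=0$ with $\tr u\in\cL_n$ would be a null vector of $H(\cL_n)-\bfi$), and since $\dim\cN_\bfi=\dim\cL_n=2|\cE|$ we get $\dL^2(\partial\Gamma)=\cN_\bfi\oplus\cL_n$; likewise $\dL^2(\partial\Gamma)=\cN_\bfi\oplus\cL_D$. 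Let $P_n$ denote the oblique projection onto $\cN_\bfi$ along $\cL_n$. A direct verification — apply $H_{max}-\bfi$, then $\tr$ — gives
\[
R(\bfi,H(\cL_n))=R(\bfi,H_D)-\cS(\bfi)\,P_n\,\tr R(\bfi,H_D),
\]
so, since $\ran\big(\tr R(\bfi,H_D)\big)=\cL_D$, the resolvent difference equals $-A\,(P_n|_{\cL_D})\,B$ with $A:=\cS(\bfi)$ (bounded, injective, finite-dimensional range) and $B:=\tr R(\bfi,H_D)$ (bounded, onto $\cL_D$). As $A$ has a bounded left inverse and $B$ a bounded right inverse, $R(\bfi,H_n)\to R(\bfi,H_0)$ in $\cB(L^2(\Gamma))$ iff $P_n|_{\cL_D}\to P_0|_{\cL_D}$; since every $P_n$ acts as the identity on $\cN_\bfi$ and $\dL^2(\partial\Gamma)=\cN_\bfi\oplus\cL_D$, this is equivalent to $P_n\to P_0$ in $\cB(\dL^2(\partial\Gamma))$; finally, in finite dimensions convergence of the idempotents $P_n$ is equivalent to convergence of their kernels $\cL_n=\ker P_n$ in $\Lambda(\dL^2(\partial\Gamma))$ — one direction being the continuity of the kernel of an idempotent, the other the continuity of the oblique projection onto the fixed subspace $\cN_\bfi$ as $\cL_n$ ranges over the open set of complements of $\cN_\bfi$ (explicitly $P_n=[\,N\mid 0\,][\,N\mid L_n\,]^{-1}$ for column maps $N$, $L_n$ of $\cN_\bfi$, $\cL_n$). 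Chaining these equivalences yields \eqref{res}$\Leftrightarrow$\eqref{plan}.

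Part i) is essentially bookkeeping with the Lagrange identity once (a)--(b) are established. The substantive point is ii): the derivation of the resolvent formula and the step that recovers the whole idempotent $P_n$ — hence $\cL_n$ — from the resolvent difference, which a priori only exposes $P_n|_{\cL_D}$; the remedy is the transversality $\dL^2(\partial\Gamma)=\cN_\bfi\oplus\cL_D$ together with the fact that $P_n$ acts as the identity on $\cN_\bfi$, which together reconstruct $P_n$ on all of $\dL^2(\partial\Gamma)$.
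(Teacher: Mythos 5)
Your proposal is correct, but it takes a genuinely different route from the paper: the paper's proof of Proposition~\ref{prop2.1} is a one-line reduction to \cite[Theorem 5.4]{LS1} combined with the observation that $(\dL^2(\partial\Gamma),\gamma_D,\gamma_N)$ is a boundary triple for $H_{min}$, whereas you reprove both parts from scratch. For part i) you carry out explicitly the mechanism hidden in the cited general result: Green's formula \eqref{vv5} read as the abstract Lagrange identity, surjectivity of $\tr$ with $\ker\tr=\hatt{H}^2_0(\Gamma)=\dom(H_{min})$, and the adjoint computation showing $\cL(H)^{\circ}=\tr(\dom(H^*))$; this is sound and complete. For part ii) you replace the abstract resolvent-convergence statement of \cite{LS1} by a concrete Krein-type formula $R(\bfi,H(\cL_n))=R(\bfi,H_D)-\cS(\bfi)\,P_n\,\tr R(\bfi,H_D)$ with the Dirichlet extension as reference, and then exploit that $\dL^2(\partial\Gamma)\cong\bbC^{4|\cE|}$ is finite dimensional: transversality $\cN_\bfi\cap\cL_n=\{0\}$ from self-adjointness plus the dimension count $\dim\cN_\bfi=\dim\cL_n=2|\cE|$ (which rests on the fact, cf.\ \eqref{ab4}, that every Lagrangian plane is the graph of a unitary between $\ker(\cJ\pm\bfi I)$), recovery of the full oblique projection $P_n$ from its restriction to $\cL_D$ via the fixed splitting $\dL^2(\partial\Gamma)=\cN_\bfi\oplus\cL_D$, and the finite-dimensional equivalence between norm convergence of the idempotents $P_n$ and gap convergence of their kernels $\cL_n$ (for the latter direction one should, e.g., take basis matrices $L_n:=P_{\cL_n}L_0$ so that $[\,N\mid L_n\,]\to[\,N\mid L_0\,]$ with invertible limit, as your formula requires). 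What your route buys is a self-contained, elementary argument tailored to metric graphs, where everything reduces to $4|\cE|$-dimensional linear algebra and the injectivity estimate $\|\cS(\bfi)\psi\|\ge c\|\psi\|$ on $\cN_\bfi$; what the paper's citation buys is brevity and uniformity with the infinite-dimensional boundary-triple framework of \cite{LS1}, where the same equivalence \eqref{res}$\Leftrightarrow$\eqref{plan} is established in much greater generality (and where the finite-dimensional shortcuts you use are not available).
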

\begin{proof}
This follows from  \cite[Theorem 5.4]{LS1} and the fact that $(\dL^2(\partial \Gamma), \gamma_D, \gamma_N)$ is a boundary triple for the minimal operator $H_{min}$.
\end{proof}

\subsection{The Maslov index of a path of Lagrangian planes in $L^2(\partial \Gamma)\oplus L^2(\partial \Gamma)$} The Maslov index is defined as the spectral flow through the point $1\in \bbC$ of a certain family of unitary matrices, cf. \eqref{ba18}, \eqref{dfnMInd}. This quantity can be expressed in terms of the signature of the crossing form, see \eqref{ab17}. Let us recall the precise definitions from \cite{BZ3}, \cite{BZ1}, \cite{BZ2}.  To that end we introduce the operator 
\begin{equation}\no
\cJ:=\begin{bmatrix}
0_{L^2(\partial\Gamma)}& I_{L^2(\partial\Gamma)}\\
-I_{L^2(\partial\Gamma)}& 0_{L^2(\partial\Gamma)}
\end{bmatrix},
\end{equation}
and notice that the symplectic form $\omega$ defined in \eqref{vv9}--\eqref{vv10} satisfies
\begin{equation}\lb{ab1}
\omega(u,v)=\langle \cJ u,v\rangle_{^d L^2(\partial\Gamma)},\ \ u,v\in\ ^d L^2(\partial\Gamma),
\end{equation}
Furthermore, one has $\cJ^2=-I_{^d L^2(\partial\Gamma)},\ \cJ^{*}=-\cJ$, and
\begin{equation}\lb{ab3}
^d L^2(\partial\Gamma)=\ker(\cJ-\bfi I)\oplus\ker(\cJ+\bfi I).
\end{equation}
Every Lagrangian plane $\cL\subset ^d L^2(\partial\Gamma)$ can be uniquely represented as a graph of a bounded operator $U\in\cB(\ker(\cJ+\bfi I_{^d L^2(\partial\Gamma)}), \ker(\cJ-\bfi I_{^d L^2(\partial\Gamma)}))$, cf. \cite[Lemma 3]{BZ2}, that is,  
\begin{equation}\lb{ab4}
	\cL=\gr(U):=\{y+Uy: y\in \ker(\cJ+\bfi I_{^d L^2(\partial\Gamma)})\}.
\end{equation}
Specifically, for arbitrary $y\in\ker(\cJ+\bfi I_{^d L^2(\partial\Gamma)})$ there exists a unique $z\in \ker(\cJ-\bfi I_{^d L^2(\partial\Gamma)})$ such that $y+z\in\cL$. For such a vector $z$ we set $Uy:=z$. Then 
\begin{equation}\lb{ab5}
	\omega(x,y)=-\omega(Ux,Uy), \ x,y\in \ker(\cJ+\bfi I_{^d L^2(\partial\Gamma)}).
\end{equation}
The operator $U$ is a unitary map  acting between Hilbert spaces $\ker(\cJ+\bfi I_{^d L^2(\partial\Gamma)})$ and $\ker(\cJ-\bfi I_{^d L^2(\partial\Gamma)})$. Indeed, for arbitrary $x,y\in\ker(\cJ+\bfi I_{^d L^2(\partial\Gamma)})$ one has
\begin{align}
\begin{split}
&\langle x,y\rangle_{^d L^2(\partial\Gamma)}=\bfi \langle \cJ x,y\rangle_{^d L^2(\partial\Gamma)}=\bfi\omega(x,y)\lb{ab6}\\
&\quad=-\bfi \omega(Ux,Uy)=-\bfi\langle \cJ Ux,Uy\rangle_{^d L^2(\partial\Gamma)}=\langle Ux,Uy\rangle_{^d L^2(\partial\Gamma)}.
\end{split}
\end{align}

Let us fix a (reference) Lagrangian plane corresponding to a unitary operator $ V\in \cB(\ker(\cJ+\bfi I_{{}^d L^2(\partial\Gamma)}), \ker(\cJ-\bfi I_{^d L^2(\partial\Gamma)}))$, 
\begin{equation}
\cZ\subset{}^d L^2(\partial\Gamma), \cZ=\gr(V).
\end{equation}
In addition, we fix a continuous path
\begin{align}
&\Upsilon:\cI\rightarrow \Lambda(^d L^2(\partial\Gamma)),\ \ \Upsilon(s)=\cF_s,\\
&\Upsilon\in C\big(\cI,  \Lambda(^d L^2(\partial\Gamma))\big), \ \cI=[a,b]\subset\bbR,
\end{align}
and  introduce the corresponding family of unitary operators $U_s$ such that 
\begin{align}
&\hspace{2cm}\cF_s=\gr(U_s),\ s\in\cI,\no\\
&\upsilon : \cI\rightarrow \cB(\ker(\cJ+\bfi I_{^d L^2(\partial\Gamma)}), \ker(\cJ-\bfi I_{^d L^2(\partial\Gamma)})),\  \upsilon(s)=U_s.\no
\end{align} 
The following is proved in \cite{BZ1}:
\begin{align}
& \upsilon\in C(\cI,\cB(\ker(\cJ+\bfi I_{^d L^2(\partial\Gamma)}), \ker(\cJ-\bfi I_{^d L^2(\partial\Gamma)}))),\lb{ab12}\\
& U_sV^{-1}  \text{\ is unitary in\ } \ker(\cJ-\bfi I_{^d L^2(\partial\Gamma)}),\ s\in\cI,\lb{ab13}\\
&\dim (\cF_s\cap \cZ)= \dim \ker (U_sV^{-1}-I_{\cX})\lb{ab15},\ s\in \cI.
\end{align}
Utilizing \eqref{ab12}--\eqref{ab15} we will now define the Maslov index as the spectral flow through the point $1\in\C$ of the family $\upsilon(s), s\in \cI$.  An illuminating discussion of the notion of the spectral flow of a family of closed operators through an admissible curve $\ell\subset\C$ can be found in \cite[Appendix]{BZ2}.  To proceed with the definition, we note that  there exists a partition $a=s_0<s_1<\cdots<s_N=b$ of $[a,b]$ and positive numbers $\varepsilon_j\in(0,\pi)$ such that  $\e^{\pm\bfi \varepsilon_j}\not \in \Sp (U_sV^{-1})$ if $s\in[s_{j-1},s_j]$, for each $1\leq j\leq N$, see \cite[Lemma 3.1]{F04}. Denote
\begin{equation}\lb{ba18}
k(s,\varepsilon):=\sum\nolimits_{0\leq \varkappa\leq \varepsilon}\dim\ker(U_sV^{-1}-\e^{\bfi\varkappa}),\ \varepsilon>0,\ s\in[a,b].
\end{equation}
The Maslov index is defined by the formula
\begin{equation}\lb{dfnMInd}
\text{Mas}(\Upsilon,\cZ):=\sum\limits_{j=1}^{N}\left(k(s_j,\varepsilon_j)-k(s_{j-1},\varepsilon_j)\right).
\end{equation}
We notice that, this definition does not depend on the choice of the partition $\{s_j\}_{j=1}^N$ and $\{\varepsilon_j\}_{j=1}^N$, cf. \cite[Proposition 3.3]{F04}.

Next we turn to the computation of the Maslov index via the crossing forms.  Assume that $\Upsilon\in C^1\big(\cI, \Lambda(^d L^2(\partial\Gamma))\big)$ and let $s_*\in\cI$.  There exists a neighbourhood $\cI_0$ of $s_*$ and a family $R_s\in C^1(\cI_0, \cB(\Upsilon(s_*), \Upsilon(s_*)^{\perp}))$, such that 
\begin{equation}\no
\Upsilon(s)=\{u+R_su\big| u\in \Upsilon(s_*)\},\ s\in \cI_0,
\end{equation}
see, e.g., \cite[Lemma 3.8]{CJLS}.   We will use the following terminology from \cite[Definition 3.20]{F04}.
\begin{definition}\label{def21} Let $\cZ$ be a Lagrangian subspace and $\Upsilon\in C^1\big(\cI, \Lambda(^d L^2(\partial\Gamma))\big)$.
	
	{\it (i)} We call $s_*\in\cI$ a conjugate point or crossing if $\Upsilon(s_*)\cap \cZ\not=\{0\}$.
	
	{\it (ii)} The finite dimentional form $$\mathfrak m_{s_*,\cZ}(u,v):=\frac{d}{ds}\omega(u,R_sv)\big|_{s=s_*}=\omega(u, \dot{R}_{s=s_*}v), \text{\ for\ }u,v \in \Upsilon(s_*)\cap \cZ,$$  is called the crossing form at the crossing $s_*$.
	
	{\it (iii)} The crossing $s_*$ is called regular if the form $\cQ_{s_*,\cZ}$ is non-degenerate, positive if $\cQ_{s_*,\cZ}$ is positive definite, and negative if $\cQ_{s_*,\cZ}$ is negative definite.

\end{definition}

The following result (cf., {\cite[Proposition 3.2.7]{BZ1}}) provides an efficient tool for computing the Malsov index at regular crossings. We denote by $n_+$ and $n_-$ the number of positive and negative squares of a form, the signature is defined by the formula $\sign=n_+-n_-$. 
\begin{theorem} \lb{masform}
	Let $\Upsilon\in C^1\big(\cI, \Lambda(^d L^2(\partial\Gamma))\big)$, and assume that all crossings are regular. Then one has 
	\begin{equation}\lb{ab17}
	\mi\,(\Upsilon,\cZ)=-n_-(\cQ_{a,\cZ})+\sum\limits_{a<s<b}\sign(\cQ_{s,\cZ})+n_+(\cQ_{b,\cZ}).
	\end{equation}
\end{theorem}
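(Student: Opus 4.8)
The plan is to reduce the statement to the finite-dimensional version of the crossing-form formula for the Maslov index (as in [BZ1, Proposition 3.2.7]), which is stated for paths in a finite-dimensional Lagrangian Grassmannian, and then transport it along the decomposition \eqref{ab4}. Since $\partial\Gamma$ is finite, $\dL^2(\partial\Gamma)\cong\bbC^{4|\cE|}$ is finite-dimensional, so there is no functional-analytic subtlety and one is genuinely in the setting of \cite{BZ1}. First I would recall from \eqref{ab12}--\eqref{ab15} that the path $\Upsilon(s)=\cF_s=\gr(U_s)$ is encoded by the unitary path $W_s:=U_sV^{-1}$ on the finite-dimensional space $\ker(\cJ-\bfi I)$, that crossings of $\Upsilon$ with $\cZ$ are exactly the points where $1\in\Sp(W_s)$, and that $\dim(\cF_s\cap\cZ)=\dim\ker(W_s-I)$. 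The Maslov index, by its definition \eqref{ba18}--\eqref{dfnMInd}, is precisely the spectral flow of the eigenvalues of $W_s$ through $1$, counted with the partition/$\varepsilon_j$ bookkeeping.

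The core computation is to identify the crossing form $\cQ_{s_*,\cZ}$ with the derivative of the eigenvalue phases of $W_s$ at a crossing. I would proceed as follows. Fix a crossing $s_*$, let $u,v\in\Upsilon(s_*)\cap\cZ$, and use the graph representation $\Upsilon(s)=\{w+R_sw: w\in\Upsilon(s_*)\}$ with $R_{s_*}=0$. Differentiating the Lagrangian condition $\omega(\cdot,\cdot)\equiv 0$ on $\Upsilon(s)$ shows $\dot R_{s_*}$ is $\omega$-symmetric (equivalently $\cJ\dot R_{s_*}$ is self-adjoint), so $\cQ_{s_*,\cZ}(u,v)=\omega(u,\dot R_{s_*}v)$ is a genuine symmetric bilinear form on the finite-dimensional space $\Upsilon(s_*)\cap\cZ$, Hermitian-extended in the usual way. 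On the other hand, writing $\cF_s=\gr(U_s)$ and comparing the two parametrizations near $s_*$, one gets a linear relation between $\dot R_{s_*}$ and $\dot U_{s_*}$ (hence $\dot W_{s_*}$) restricted to $\cF_{s_*}\cap\cZ\cong\ker(W_{s_*}-I)$. A short computation — using unitarity of $W_s$ so that $W_{s_*}^{*}\dot W_{s_*}=-\bfi\, G$ with $G$ self-adjoint, and first-order eigenvalue perturbation for the eigenvalue cluster of $W_s$ at $1$ — identifies the signature of $\cQ_{s_*,\cZ}$ with $n_+-n_-$ of the matrix of phase velocities $\tfrac{d}{ds}\arg\mu_k(s)\big|_{s_*}$ of the eigenvalues $\mu_k(s)$ of $W_s$ that equal $1$ at $s_*$, up to a fixed sign determined by the orientation conventions in \eqref{ba18}. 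I would fix the sign once and for all by checking it against a one-dimensional model (a single eigenvalue crossing $1$), so that a positive crossing form corresponds to eigenvalues leaving $1$ in the counterclockwise direction, matching how $k(s,\varepsilon)$ counts in \eqref{ba18}.

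With the identification $\sign(\cQ_{s_*,\cZ})$ equals the net phase flow of $W_s$ through $1$ at $s_*$ in hand, the formula \eqref{ab17} is just the careful accounting of endpoints versus interior crossings. Interior crossings contribute their full signature because, for $s$ slightly less than and slightly greater than an interior regular crossing, eigenvalues move off $1$ monotonically, and the definition \eqref{dfnMInd} picks up $n_+$ from the right of the crossing and $-n_-$ is forced by the eigenvalues that were just above $1$ crossing down; summed, this gives $n_+-n_-=\sign$. At the left endpoint $a$, only eigenvalues moving in the negative direction (i.e. into the lower half near $1$, equivalently those that were counted as about to cross) contribute, giving $-n_-(\cQ_{a,\cZ})$, while at the right endpoint $b$ only the $n_+(\cQ_{b,\cZ})$ eigenvalues that have just arrived from below contribute. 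This is exactly the standard normalization of the Maslov index used in \cite{BZ1, F04}, and I would quote \cite[Proposition 3.2.7]{BZ1} (together with \cite[Proposition 3.3]{F04} for partition-independence) for this endpoint bookkeeping rather than redo it.

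The main obstacle is the sign- and orientation-bookkeeping in the middle paragraph: relating the \emph{analytic} crossing form $\omega(u,\dot R_{s_*}v)$ defined via the graph over $\Upsilon(s_*)$ to the \emph{spectral} quantity $\tfrac{d}{ds}\arg\mu_k$ of $W_s=U_sV^{-1}$, with the correct sign, through the two different graph parametrizations (one over $\Upsilon(s_*)$, one over $\ker(\cJ+\bfi I)$). Everything else is either finite-dimensional linear algebra or a direct citation. I would therefore organize the proof as: (1) recall the finite-dimensional setup and \eqref{ab12}--\eqref{ab15}; (2) show $\cQ_{s_*,\cZ}$ is symmetric and compute it in terms of $\dot W_{s_*}$ on $\ker(W_{s_*}-I)$; (3) invoke first-order perturbation theory for unitary matrices to equate $\sign(\cQ_{s_*,\cZ})$ with the phase-flow of $W_s$ at $s_*$; (4) assemble via \cite[Proposition 3.2.7]{BZ1}.
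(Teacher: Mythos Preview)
The paper does not prove this theorem at all: it is stated as a known result and attributed to \cite[Proposition 3.2.7]{BZ1}. Since $^dL^2(\partial\Gamma)\cong\bbC^{4|\cE|}$ is finite-dimensional, the result is literally a restatement of that proposition in the present notation, and no further argument is given in the paper.

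Your proposal therefore does strictly more than the paper: you sketch an actual proof of the cited proposition. The outline---encode $\Upsilon$ by the unitary path $W_s=U_sV^{-1}$, identify $\cQ_{s_*,\cZ}$ with the Hermitian form $-\bfi W_{s_*}^*\dot W_{s_*}$ on $\ker(W_{s_*}-I)$ via first-order perturbation theory, and then read off the spectral flow through $1$ as signatures at interior crossings and one-sided counts at the endpoints---is the standard route and is essentially correct. One caveat: in your step (4) you propose to ``assemble via \cite[Proposition 3.2.7]{BZ1},'' but that proposition \emph{is} the statement you are proving, so invoking it for the endpoint bookkeeping is circular. If you want a self-contained argument you must carry out the endpoint accounting directly from the definition \eqref{ba18}--\eqref{dfnMInd} (as you in fact describe informally in your third paragraph), or else cite a genuinely more primitive result such as \cite[Theorem 3.27]{F04} for the local contribution of a regular crossing. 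Apart from that circularity and the sign check you already flag, the argument is sound.
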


We will now review the definition of the Maslov index for {\it two} paths with values in Lagrangian--Grassmannian $\Lambda(^d L^2(\partial\Gamma))$, see \cite[Section 3.5]{F04}. Let us fix two paths of Lagrangian planes
\begin{equation}\no
\Upsilon_1,\Upsilon_2\in C\big(\cI, \Lambda(^d L^2(\partial\Gamma))\big),
\end{equation}
and let $\diag:=\{(p,p):p\in\ ^dL^2(\partial\Gamma)\}$ denote the diagonal plane.  On the Hilbert space $^dL^2(\partial\Gamma)\oplus\ ^dL^2(\partial\Gamma)$ we define the symplectic form $\wti{\omega}:=\omega\oplus(-\omega)$ with the complex structure $\wti{\cJ}:=\cJ\oplus(-\cJ)$ and denote the resulting space of Lagrangian planes  by $\Lambda_{\wti{\omega}}\big(^d L^2(\partial\Gamma)\oplus\, ^dL^2(\partial\Gamma)\big)$. Let
\begin{equation}\no
\wti{\Upsilon}:=\Upsilon_1\oplus\Upsilon_2\in C \big(\cI,\Lambda_{\hat{\omega}}(^d L^2(\partial\Gamma)\oplus\ ^dL^2(\partial\Gamma))\big).
\end{equation}
TheMaslov index of two paths $\Upsilon_1,\Upsilon_2$ is defined by $\mi(\Upsilon_1,\Upsilon_2):=\mi(\wti{\Upsilon},\diag).$
\begin{remark}\lb{rem}
We notice that $\mi(\Upsilon_1, \Upsilon_2)=\mi(\Upsilon_1,\cZ)$ whenever $\Upsilon_2(s)=\cZ$ for all $s\in\cI$ then. If $\Upsilon_1(s)=\cF$ for all $s\in\cI$ then $\mi(\Upsilon_1, \Upsilon_2)=-\mi(\Upsilon_2,\cF)$.
\end{remark}

\section{The spectral flow, the Hadamard-type formula and the Maslov index}
The purpose of this section is twofold: (1) we derive a formula relating the spectral flow of the family of  Schr\"odinger operators and the Maslov index of the associated path of Lagrangian planes; (2) we obtain an Hadamard-type formula relating the derivative of the eigenvalue curves and Maslov crossing form.
\begin{hypothesis}\lb{3.1}
Let $\Upsilon: t\mapsto (A_t, B_t)$  be a one-parameter family  of $2|\cE|\times 4|\cE|$  matrices. Suppose that $\Upsilon\in C^1([\alpha,\beta], \bbC^{2|\cE|\times 4|\cE|})$, $\alpha,\beta \in\bbR$. In addition, suppose that $\rank(A_t, B_t)=2|\cE|$ and $A_tB^*_t=B_tA_t^*$ for all $t$. 
\end{hypothesis}

We refer the reader to \cite[Section 1.4.1]{BK} and \cite{Pa} for the following facts used to describe 
self-adjoint extensions of Schr\"odinger operators on graphs (specifically, see \cite[Lemma 5]{Pa} for item (ii), and the discussion following \cite[Proposition 1]{Pa} for item (iii) below).

\begin{proposition}\lb{rem3.2}
	Assume Hypothesis \ref{3.1}. Let us introduce the following subspace of ${}^d L^2(\partial\Gamma)$,
	\[ \cL_t:=\{(\phi,\psi): A_t\phi+B_t\psi=0\},\, t\in[\alpha,\beta].\]
Then for all $t\in[\alpha,\beta]$ one has
	\begin{itemize}
		\item[(i)] $\cL_t\in\Lambda\big({}^d L^2(\partial\Gamma)\big)$,
		\item [(ii)]  $\cL_t=\{(-B^*_t f,A^*_t f):  f \in L^2(\partial\Gamma)\}$,
		\item [(iii)] $\det (A_tA_t^*-B_tB_t^*)\not=0$,
		\item[(iv)] if $(\phi,\psi)\in\cL_t$ then there is a unique $f\in L^2(\partial\Gamma)$ such that $\phi=-B_t^*f$ and $\psi=A_t^*f$, moreover, $f$ is given by
		 $f=(A_tA_t^*-B_tB_t^*)^{-1}(B_t\phi+A_t\psi)$.
	\end{itemize}
\end{proposition}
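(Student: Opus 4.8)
The plan is to prove the four items in the order (ii) $\Rightarrow$ (iii) $\Rightarrow$ (iv), then (i), since each later item will lean on the structure exposed by the previous one; (ii) and (i) are the conceptual core and everything else is essentially linear algebra in $\bbC^{2|\cE|}$. For item (ii), the key observation is that $\rank(A_t,B_t)=2|\cE|$ means the $2|\cE|\times 4|\cE|$ matrix $(A_t\ \ B_t)$ is surjective, so its transpose/adjoint $(A_t^*\ \ B_t^*)^\top$ has trivial kernel, and $\ran(-B_t^*,A_t^*)$ is a $2|\cE|$-dimensional subspace of $^dL^2(\partial\Gamma)\cong\bbC^{4|\cE|}$. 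I would then check the two inclusions defining $\cL_t$: if $(\phi,\psi)=(-B_t^*f,A_t^*f)$ then $A_t\phi+B_t\psi=(-A_tB_t^*+B_tA_t^*)f=0$ by the symmetry hypothesis $A_tB_t^*=B_tA_t^*$, giving $\ran(-B_t^*,A_t^*)\subseteq\cL_t$; conversely $\cL_t=\ker(A_t\ \ B_t)$ has dimension $4|\cE|-2|\cE|=2|\cE|$ by rank--nullity, matching the dimension of $\ran(-B_t^*,A_t^*)$, so the two spaces coincide.

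For item (iii), I would argue that $A_tA_t^*-B_tB_t^*$ is injective on $\bbC^{2|\cE|}$. Suppose $(A_tA_t^*-B_tB_t^*)f=0$; then $A_t(A_t^*f)+B_t(-B_t^*f)=0$, which is exactly the statement that $(-B_t^*f,A_t^*f)\in\cL_t$ in a second guise — but actually the cleaner route is to pair with $f$: $\langle(A_tA_t^*-B_tB_t^*)f,f\rangle = \|A_t^*f\|^2-\|B_t^*f\|^2$, which does not obviously vanish, so instead I would use that $(A_t^*f,B_t^*f)=0$ forces $f=0$ (injectivity of $(A_t^*,B_t^*)^\top$ from (ii)), and show $(A_tA_t^*-B_tB_t^*)f=0$ implies $A_t^*f=B_t^*f=0$. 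For this, note $0=(A_tA_t^*-B_tB_t^*)f$ combined with the symmetry relation: consider the $2|\cE|\times 2|\cE|$ matrix $M_t:=A_tA_t^*+B_tB_t^*$, which is positive; one shows the map $f\mapsto(A_t^*f,B_t^*f)$ has a left inverse built from $M_t^{-1}$, and then that $\ker(A_tA_t^*-B_tB_t^*)\subseteq\ker(A_t^*)\cap\ker(B_t^*)=\{0\}$ using $A_tB_t^*=B_tA_t^*$. Once (iii) holds, item (iv): given $(\phi,\psi)\in\cL_t$, by (ii) there is $f$ with $\phi=-B_t^*f$, $\psi=A_t^*f$, and uniqueness of $f$ is injectivity of $(A_t^*,B_t^*)^\top$; then compute $B_t\phi+A_t\psi = -B_tB_t^*f+A_tA_t^*f=(A_tA_t^*-B_tB_t^*)f$, so $f=(A_tA_t^*-B_tB_t^*)^{-1}(B_t\phi+A_t\psi)$, which is well-defined by (iii).

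For item (i), I would verify $\cL_t=\cL_t^\circ$ directly using \eqref{eq:def_omega}. The inclusion $\cL_t\subseteq\cL_t^\circ$ is the isotropy computation: for $(\phi,\psi)=(-B_t^*f,A_t^*f)$ and $(\phi',\psi')=(-B_t^*g,A_t^*g)$ in $\cL_t$ (using form (ii)), $\omega((\phi,\psi),(\phi',\psi'))=\langle\psi,\phi'\rangle-\langle\phi,\psi'\rangle = \langle A_t^*f,-B_t^*g\rangle-\langle-B_t^*f,A_t^*g\rangle = -\langle B_tA_t^*f,g\rangle+\langle A_tB_t^*f,g\rangle=0$ by the symmetry hypothesis. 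For the reverse inclusion, since $\omega$ is a nondegenerate symplectic form on the $4|\cE|$-dimensional space $^dL^2(\partial\Gamma)$, one has $\dim\cL_t+\dim\cL_t^\circ = 4|\cE|$, so $\dim\cL_t^\circ=2|\cE|=\dim\cL_t$, and combined with $\cL_t\subseteq\cL_t^\circ$ this forces equality; hence $\cL_t$ is Lagrangian. I expect the main obstacle to be item (iii): showing $\ker(A_tA_t^*-B_tB_t^*)=\{0\}$ cleanly requires combining the rank condition, the symmetry $A_tB_t^*=B_tA_t^*$, and positivity of $A_tA_t^*+B_tB_t^*$ in just the right way — the temptation to prove it via a signature/definiteness argument fails because $A_t^*f$ and $B_t^*f$ need not be comparable in norm, so one must instead exploit that $(A_t\ \ B_t)$ surjective is equivalent to $(A_t^*\ \ B_t^*)^\top$ injective together with the compatibility relation. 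All other steps are routine dimension counts and algebraic identities.
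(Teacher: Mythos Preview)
Your arguments for items (ii), (i), and the existence/uniqueness part of (iv) are correct and are the standard dimension count plus isotropy verification; note that the paper itself does not supply a proof but simply refers to \cite[Section~1.4.1]{BK} and \cite{Pa}, so you are in effect filling in details the paper leaves to the literature.

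The real problem is item (iii). You rightly flag it as the main obstacle and sketch several attacks, ultimately asserting that $\ker(A_tA_t^*-B_tB_t^*)\subseteq\ker(A_t^*)\cap\ker(B_t^*)$, but you never carry this out---and you cannot, because statement (iii) is false as written. Take $A_t=B_t=I_{2|\cE|}$: then $\rank(A_t,B_t)=2|\cE|$ and $A_tB_t^*=I=B_tA_t^*$, so Hypothesis~\ref{3.1} is satisfied, yet $A_tA_t^*-B_tB_t^*=0$. More generally, Robin data $A_t=\Theta=\Theta^*$, $B_t=I$ with $1\in\spec(\Theta^2)$ already violates (iii). What \emph{is} always true, and what actually suffices for the differentiability argument in Theorem~\ref{prop3.4}, is $\det(A_tA_t^*+B_tB_t^*)\neq0$: indeed $A_tA_t^*+B_tB_t^*=(A_t\ \ B_t)(A_t\ \ B_t)^*$ is positive definite by the rank hypothesis, and from $\phi=-B_t^*f$, $\psi=A_t^*f$ one computes $A_t\psi-B_t\phi=(A_tA_t^*+B_tB_t^*)f$, whence $f=(A_tA_t^*+B_tB_t^*)^{-1}(A_t\psi-B_t\phi)$. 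So the gap in your treatment of (iii) reflects a sign error in the stated proposition rather than a flaw in your strategy; with the sign corrected, your outline for all four items goes through verbatim.
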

In what follows we use the same symbol $\Upsilon$ as in Hypothesis \ref{3.1}
to denote the flow $t\mapsto\cL_t$ of the respective Lagrangian subspaces.

Using the family of matrices $A_t, B_t$ we introduce a family of Schr\"odinger operators as follows
\begin{align}
H_t:&=-\frac{d^2}{dx^2}+q;\ H_t:\dom(H_t)\subset L^2(\Gamma)\rightarrow L^2(\Gamma),\no\\
\dom(H_t)&=\{f\in\hatt{H}^2(\Gamma): A_t\gaD f+B_t\gaN f=0\}\no\\
&=\{f\in\hatt{H}^2(\Gamma):\tr f\in\cL_t\}.\no
\end{align}
By \cite[Theorem 1.4.4, 1.4.19]{BK}, \cite[Proposition 6]{Pa} these operators are self-adjoint extensions of $H_{min}$, their spectra are discrete  and bounded from below, see \cite[Theorem 3.1.1]{BK}. We recall that the number of negative eigenvalues of an operator is called its {\em Morse index}.
Our first goal is to express the difference between the Morse indices of the operators $H_{\alpha}$ and $H_{\beta}$ in terms of the Maslov index of the path of Lagrangian planes $\Upsilon\in C^1\big([\alpha, \beta]; \Lambda({}^dL^2(\partial\Gamma))\big)$. Consequently, we will obtain a relation between the the spectral flow of the family $t\mapsto H_t$ and the Maslov index of $\Upsilon$. Heuristically, the spectral flow is the net number of eigenvalues of $H_t$ that pass through zero in a positive direction as $t$ changes from $\alpha$ to $\beta$. In more rigorous terms, there exists a partition  $\alpha=t_0<t_1<\cdots<t_N=\beta,$ and $N$ intervals $[a_{\ell},b_{\ell}],\ a_{\ell}<0<b_{\ell},\ 1\leq \ell\leq N,$ such that
\begin{equation}\lb{r1}
a_{\ell}, b_{\ell}\not\in\spec\left(H_t\right),\text{\ for all\ }t\in [t_{\ell-1},t_{\ell}], \ 1\leq \ell\leq N.
\end{equation}
Then, the spectral flow through $\lambda=0$ is defined by
\begin{equation}\no
\spflow\left(\{H_t\}_{t=\alpha}^{\beta}\right):=\sum\limits_{\ell=1}^N\sum\limits_{a_{\ell}\leq\lambda<0}\left(\dim\ker\left(H_{t_{\ell-1}}-\lambda\right)-\dim\ker\left(H_{t_{\ell}}-\lambda\right)\right).
\end{equation}
Of course, one can show the spectral flow does not depend on the choice of the partitions, see, for instance, \cite[Appendix]{BZ1}. Moreover, as discussed in \cite[Section 3.3]{LS1}, one has
\begin{equation}\no
\spflow\left(\{H_t\}_{t=\alpha}^{\beta}\right)=\mo(H_{\alpha})-\mo(H_{\beta}).
\end{equation} 

\begin{theorem}\lb{thm3.3}
Assume Hypothesis \ref{3.1} and let 
\begin{equation}\lb{3.3}
\cK_0:=\{(\gaD f, \gaN f):\, f\in\hatt{H}^2(\Gamma) \text{ and } -f''+qf=0\}\in\Lambda\big(^d L^2(\partial\Gamma)\big).
\end{equation}
Then one has
\begin{equation}\lb{3.2}
\mo(H_{\alpha})-\mo(H_{\beta})=\mi(\Upsilon, \cK_0),
\end{equation}
and, consequently, 
\begin{equation}
\spflow(\{H_t\}_{t=\alpha}^{\beta})=\mi(\Upsilon, \cK_0).
\end{equation}
\end{theorem}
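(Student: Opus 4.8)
The plan is to reduce the statement to the homotopy invariance of the Maslov index by adjoining a spectral parameter and working on the boundary of a rectangle in the $(t,\lambda)$–plane.

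First I would introduce, for each $\lambda\in\bbR$, the Cauchy–data plane
\[
\cK_\lambda:=\{(\gaD f,\gaN f):\ f\in\hatt H^2(\Gamma),\ -f''+qf=\lambda f\},
\]
so that $\cK_0$ is the plane in \eqref{3.3}. Exactly as for $\cK_0$ one checks $\cK_\lambda\in\Lambda\big({}^dL^2(\partial\Gamma)\big)$: on each edge the solution space of $-f''+qf=\lambda f$ is two–dimensional and the Cauchy–data map is injective on it, so $\dim\cK_\lambda=2|\cE|$, while isotropy is Green's formula \eqref{vv5} applied to two solutions of the same equation (the bulk term vanishes since $q,\lambda$ are real). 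Because the fundamental system of $-f''+qf=\lambda f$ is analytic in $\lambda$, the map $\lambda\mapsto\cK_\lambda$ is real–analytic into the Lagrangian–Grassmannian, and $t\mapsto\cL_t$ is $C^1$ by Hypothesis \ref{3.1} and Proposition \ref{rem3.2}(ii). The key dictionary is the elementary observation that $f\in\ker(H_t-\lambda)$ exactly when $f$ solves the equation and $\tr f\in\cL_t$, i.e. $\tr f\in\cK_\lambda\cap\cL_t$; since $\tr$ is injective on solutions, $\dim\ker(H_t-\lambda)=\dim(\cK_\lambda\cap\cL_t)$, and in particular $\lambda\in\spec(H_t)\Leftrightarrow\cK_\lambda\cap\cL_t\neq\{0\}$.

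Next I would fix the rectangle. As $[\alpha,\beta]$ is compact and $t\mapsto H_t$ is norm–resolvent continuous (Proposition \ref{prop2.1}(ii), using continuity of $t\mapsto\cL_t$), the set $\bigcup_{t\in[\alpha,\beta]}\spec(H_t)$ is bounded below; choose $\lambda_\infty>0$ with $-\lambda_\infty<\inf\bigcup_t\spec(H_t)$, so $\cK_{-\lambda_\infty}\cap\cL_t=\{0\}$ for every $t$. Put $Q:=[\alpha,\beta]\times[-\lambda_\infty,0]$. The map $(t,\lambda)\mapsto\cK_\lambda\oplus\cL_t$ is continuous on the contractible set $Q$, hence its restriction to the loop $\partial Q$ is null–homotopic in $\Lambda_{\widetilde\omega}\big({}^dL^2(\partial\Gamma)\oplus{}^dL^2(\partial\Gamma)\big)$ and therefore has vanishing Maslov index relative to $\diag$. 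By additivity of $\mi$ under concatenation this reads $0=\mi(\Gamma_{\mathrm b})+\mi(\Gamma_{\mathrm r})+\mi(\Gamma_{\mathrm t})+\mi(\Gamma_{\mathrm l})$, where $\Gamma_{\mathrm b}=\{(\cK_{-\lambda_\infty},\cL_t)\}_{t:\alpha\to\beta}$, $\Gamma_{\mathrm r}=\{(\cK_\lambda,\cL_\beta)\}_{\lambda:-\lambda_\infty\to0}$, $\Gamma_{\mathrm t}=\{(\cK_0,\cL_t)\}_{t:\beta\to\alpha}$, $\Gamma_{\mathrm l}=\{(\cK_\lambda,\cL_\alpha)\}_{\lambda:0\to-\lambda_\infty}$. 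The analytic core is to evaluate the four terms via Theorem \ref{masform}. The bottom side has no crossings, so $\mi(\Gamma_{\mathrm b})=0$. Using Remark \ref{rem} together with the behaviour of $\mi$ under path reversal, $\mi(\Gamma_{\mathrm t})$ is identified with $\pm\mi(\Upsilon,\cK_0)$. For the two vertical sides Remark \ref{rem} reduces the problem to the Maslov index of $\lambda\mapsto\cK_\lambda$ relative to the fixed plane $\cL_\beta$ (resp. $\cL_\alpha$); at a crossing $\lambda_*$ (an eigenvalue of $H_t$, $t=\beta$ or $\alpha$, with eigenfunction $f$) I would take a $C^1$ family $g_\lambda$ of solutions of $-g_\lambda''+qg_\lambda=\lambda g_\lambda$ with $g_{\lambda_*}=f$, apply Green's formula \eqref{vv5}, and differentiate the equation in $\lambda$, obtaining
\[
\mathfrak m_{\lambda_*,\cL_t}\big((\gaD f,\gaN f),(\gaD f,\gaN f)\big)=-\|f\|_{L^2(\Gamma)}^2 .
\]
Hence every $\lambda$–crossing is regular and the crossing form is definite, so Theorem \ref{masform} evaluates $\mi(\Gamma_{\mathrm r})$ and $\mi(\Gamma_{\mathrm l})$ as $\mp\mo(H_\beta)$ and $\pm\mo(H_\alpha)$ (all negative eigenvalues lie in $(-\lambda_\infty,0)$ by the choice of $\lambda_\infty$). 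Feeding the four contributions into $\mi(\partial Q)=0$ gives $\mo(H_\alpha)-\mo(H_\beta)=\mi(\Upsilon,\cK_0)$, and the second identity follows from the already recorded $\spflow(\{H_t\}_{t=\alpha}^\beta)=\mo(H_\alpha)-\mo(H_\beta)$.

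The step I expect to be delicate is precisely the sign and orientation bookkeeping in the assembly above: one must reconcile the convention for $\mi$ in \eqref{dfnMInd}, the orientation in which $\partial Q$ is traversed, the reversal and duality identities of Remark \ref{rem}, and the sign of the $\lambda$–crossing form, so that the four boundary terms combine with the stated signs. A related subtlety is that $0$ may belong to $\spec(H_\alpha)\cup\spec(H_\beta)$, producing a crossing at a corner of $Q$; I would handle this either by first replacing $0$ with a nearby value $\lambda_0<0$ that is a regular value of both $H_\alpha$ and $H_\beta$ and then letting $\lambda_0\to0$, or by splitting the corner contributions between the adjacent sides. Note that regularity of the crossings in the $t$–direction is not needed here — only the definiteness of the $\lambda$–direction crossing form enters — which is why this rectangle/homotopy scheme remains valid even when the path $\Upsilon$ has degenerate intersections with $\cK_0$.
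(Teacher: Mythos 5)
Your proposal follows essentially the same route as the paper: the Morse--Maslov box in the $(\lambda,t)$-plane with $\lambda_\infty$ below the uniform lower spectral bound, vanishing of the Maslov index of the boundary loop by homotopy invariance, additivity under catenation, and evaluation of the constant-$t$ sides via the $\lambda$-direction crossing form $\mathfrak m = -\|u\|_{L^2(\Gamma)}^2$, which is exactly the computation in the paper's proof. The only cosmetic difference is your proposed corner regularization near $\lambda=0$, which the paper avoids by using the endpoint terms $-n_-$ and $+n_+$ in \eqref{ab17} (the negative definiteness of the crossing form makes the contribution at $\lambda=0$ vanish automatically).
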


\begin{proof} Let us  outline the strategy of the proof. First, we recast the eigenvalue problem $H_t u=\lambda u$ in terms of the intersection of Lagrangian planes
\begin{align}
\begin{split}\lb{3.5}
&\cK_{\lambda}:=\{(\gaD f, \gaN f): \, f\in\hatt{H}^2(\Gamma) \text{ and } -f''+qf=\lambda f\}\in\Lambda\,(^d L^2(\partial\Gamma)),\\
&\cL_t:=\{(u,v)\in\,^dL^2(\partial\Gamma): A_t u+B_tv=0 \}\in\Lambda\,(^d L^2(\partial\Gamma)).
\end{split}
\end{align}
Then we construct a loop of Lagrangian planes $(\cK_{\lambda}, \cL_t)$,  where $(\lambda, t)$ follows the boundary of the square displayed in Figure 1. Due to homotopy invariance, the Maslov index of this loop is equal to zero. Next, we show that the Maslov indices of the parts of the loop corresponding to the horizontal sides of the square are equal to the Morse indices of the respective operators. Finally, using the additivity of the Maslov index under catenation of paths we  obtain \eqref{3.2}.

The operators $H_t$, $\alpha\leq t\leq \beta$ are bounded from below uniformly with respect to $t\in[\alpha,\beta]$, cf., e.g., \cite[Section 3.3]{KS06}. Hence, there exists $\lambda_{\infty}<0$ such that $\ker(H_t-\lambda)=\{0\}$ for all $t\in[\alpha, \beta]$ and all $\lambda\leq \lambda_{\infty}$. For such a $\lambda_{\infty}$ we consider the parameter set $\Sigma$, the square $\cP$ in the $(\lambda,t)$-plane, and the map from $\Sigma$ to $\cP$,  
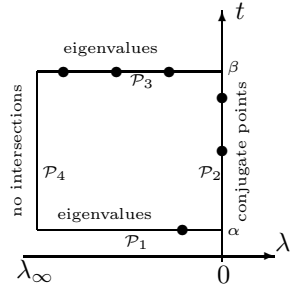
\begin{figure}
	\begin{picture}(100,100)(-20,0)
	\put(78,0){0}
	\put(80,8){\vector(0,1){95}}
	\put(5,10){\vector(1,0){95}}
	\put(71,40){\text{\tiny $\cP_2$}}
	\put(12,40){\text{\tiny $\cP_4$}}
	\put(45,75){\text{\tiny $\cP_3$}}
	\put(43,14){\text{\tiny $\cP_1$}}
	\put(100,12){$\lambda$}
	\put(85,100){$t$}
	\put(80,20){\line(0,1){60}}
	\put(10,20){\line(0,1){60}}
	\put(80,8){\line(0,1){4}}
	\put(2,2){$\lambda_\infty$}
	\put(10,20){\line(1,0){70}}
	\put(10,80){\line(1,0){70}}
	\put(65,20){\circle*{4}}
	\put(80,50){\circle*{4}}
	\put(80,70){\circle*{4}}
	\put(20,80){\circle*{4}}
	\put(40,80){\circle*{4}}
	\put(60,80){\circle*{4}}
	\put(20,87){{\tiny \text{eigenvalues}}}
	\put(14,24){{\tiny \,\,\,\text{eigenvalues}}}  
	\put(85,23){\rotatebox{90}{{\tiny conjugate points}}}
	\put(82,18){{\tiny $\alpha$}}
	\put(82,80){{\tiny $\beta$}}
	\put(0,25){\rotatebox{90}{{\tiny no intersections}}}
	\end{picture}
	\caption{The Morse -- Maslov box: the conjugate points are the eigenvalues}
\end{figure} 
\begin{align}
&\Sigma:=\cup_{j=1}^4\Sigma_j\to\cP=\cup_{j=1}^4\cP_j,\ s\mapsto (\lambda(s), t(s)),\lb{w17w}
\end{align}
where $\cP_j,\ j=1,\cdots, 4$ are the sides of the positively oriented boundary of the square $[\lambda_{\infty},0]\times [\alpha,\beta]$, and the parameter set $\Sigma=\cup_{j=1}^4 \Sigma_j$ and $\lambda(\cdot)$, $t(\cdot)$ are defined as follows:
\begin{align}
&\lambda(s)=s,\, t(s)=\alpha ,\, s\in\Sigma_1:=[\lambda_{\infty},0],\lb{18}\\
&\lambda(s)=0,\, t(s)=s+\alpha ,\, s\in\Sigma_2:=[0,\beta-\alpha ],\lb{19}\\
&\lambda(s)= -s+\beta-\alpha ,\, t(s)= \beta,\, s\in\Sigma_3:=[\beta-\alpha,\beta-\alpha -\lambda_{\infty}],\lb{20}\\
&\lambda(s)=\lambda_{\infty},\, t(s)=-s+2\beta-\alpha -\lambda_{\infty},\lb{21}\\
&\hskip3cm s\in\Sigma_4:=[\beta-\alpha -\lambda_{\infty}, 2(\beta-\alpha)-\lambda_{\infty}].\no
\end{align} 

The mapping
\begin{equation}\no
\tr: \ker\big(H_{t(s)}-\lambda(s)\big)\rightarrow \cK_{\lambda(s)}\cap\cL_{t(s)},\ s\in\Sigma,
\end{equation}
is one-to-one and onto, hence, 
\begin{equation}\lb{3.11}
\dim (\ker(H_{t(s)}-\lambda(s)))=\dim ( \cK_{\lambda(s)}\cap\cL_{t(s)}),\ s\in\Sigma.
\end{equation}
In particular, $\lambda(s)$ is an eigenvalue of $H_s$ if and only if $ \cK_{\lambda(s)}\cap\cL_{t(s)}\not=\{0\}$. Using this observation we will first show that
\begin{equation}\lb{3.12}
\mi(\cK_{\lambda(s)}|_{\Sigma_1}, \cL_{\alpha})=-\mo(H_{\alpha}) \text{\  and\ } \mi(\cK_{\lambda(s)}|_{\Sigma_3}, \cL_{\beta})=\mo(H_{\beta}).
\end{equation}
The argument is based on a standard computation of the Maslov form at the crossings on the horizontal sides of the square, cf., e.g., \cite[(5.3)]{BbF95}.  Let us focus on the first equality in \eqref{3.12}, the proof of the second one is analogous. We will show  that each crossing on $\Sigma_1$ is negative (hence, non-degenerate), and use \eqref{ab17} to verify that geometric multiplicities of negative eigenvalues of $H_{\alpha}$ add up to minus the Maslov index. 
To begin the proof of the first equity in \eqref{3.12}, we let $s_*\in [\lambda_{\infty}, 0]$ be a conjugate point so that $\cK_{\lambda(s_*)}\cap\cL_{\alpha}\not=\{0\}$. By \cite[Theorem 3.8 and Remark 3.9]{BbF95} the map  $s\mapsto \cK_{\lambda(s)}$ is contained in $C^1\big([\lambda_{\infty},0], \Lambda(^d L^2(\partial\Gamma))\big)$. Then there exists a small neighbourhood  $\Sigma_{s_*}\subset[\lambda_{\infty}, 0]$ of $s_*$  and a family of operators $R_{s+s_*}$ so that
\begin{equation}\lb{w35}
(s+s_*)\mapsto R_{(s+s_*)}\ \text{in}\  C^1\big(\Sigma_{s_*}, \cB(\cK_{\lambda(s_*)}, (\cK_{\lambda(s_*)})^{\perp})\big),\ R_{s_*}=0,
\end{equation} 
and 
\begin{equation}
\cK_{\lambda(s)}=\{(\phi,\psi)+R_{s+s_*}(\phi,\psi)\big| (\phi,\psi)\in \cK_{\lambda(s_*)} \}\ \text{for all }\ (s+s_*)\in \Sigma_{s_*},
\end{equation}
see, e.g.,   \cite[Lemma 3.8]{CJLS}. 
Let us fix $(\phi_0,\psi_0)\in \cK_{\lambda(s_*)}$ and consider the family 
\begin{equation*}
(\phi_s,\psi_s):=(\phi_0,\psi_0)+R_{(s+s_*)}(\phi_0,\psi_0)\text{\ with small\ }|s|.
\end{equation*}
Since $(\phi_s,\psi_s)\in\cK_{\lambda(s)}$, there exists a unique $u_s$ satisfying 
\begin{equation*}
-u_s''+qu_s=\lambda(s+s_*)u_s\text{\ and\ }\tr u_s=(\phi_s,\psi_s)\text{\ for small\ }|s|.
\end{equation*}
Next, using \eqref{vv5} we calculate:
\begin{align*}
&\omega\left((\phi_0,\psi_0), (\phi_0,\psi_0)+ R_{(s+s_*)}(\phi_0,\psi_0)\right)=\int_{\partial\Gamma}\psi_0 \overline{\phi_s}-\phi_0\overline{\psi_s}\no\\
&\quad=-\int_\Gamma u_0''\overline{u_s}-u_0\overline{u''_s}\no\\
&\quad=\langle   -u''_0+qu_0, u_s\rangle_{L^2(\Gamma)}- \langle u_0,- u''_s+qu_s\rangle_{L^2(\Gamma)}\no\\
&\quad=\langle  \lambda(s_*) u_0, u_s\rangle_{L^2(\Gamma)}- \langle u_0,\lambda(s_*+s) u_s\rangle_{L^2(\Gamma)}
=-\langle u_0,s u_s\rangle_{L^2(\Gamma)}.\
\end{align*}
Recalling Definition \ref{def21} {\it (ii)}, we evaluate the crossing form
\begin{align}
&\cQ_{s_*,\cL_{\alpha}}\left((\phi_0,\psi_0),(\phi_0,\psi_0)\right):=\frac{d}{ds}\omega\left((\phi_0,\psi_0), R_{(s+s_*)}(\phi_0,\psi_0)\right)\big|_{s=0}\no\\
&=\lim\limits_{s\rightarrow 0} \frac{\omega\left((\phi_0,\psi_0), R_{(s+s_*)}(\phi_0,\psi_0)\right)}{s}=\lim\limits_{s\rightarrow 0} \frac{-\langle u_0,s u_s\rangle_{L^2(\Gamma)}}{s}=-\|u_0\|^2_{L^2(\Gamma)},\no
\end{align}
where we used the continuity of $s\mapsto u_s$ at $0$ established in a more general setting in \cite[page 355]{LSS}.
Therefore, the crossing form is negative definite at all conjugate points on $[\lambda_{\infty}, 0]$ and, using \eqref{ab17}, one obtains
\begin{align}
&\mi\left(\cK_{\lambda(s)}|_{s\in\Sigma_1},\cL_{\alpha}\right)=-n_-\left(\cQ_{\lambda_{\infty},\cL_{\alpha}}\right)+\sum\limits_{\substack{\lambda_{\infty}<s<0:\\
		\cK_{\lambda(s)}\cap \cL_{\alpha}\not=\{0\}	}}\sign\ \cQ_{s,\cL_{\alpha}}\no\\
&\quad+n_+(\cQ_{0,\cL_{\alpha}})=-\sum\limits_{\lambda_{\infty}\leq s< 0}\dim \ker\left(H_{\alpha}-\lambda(s)\right)=-\mo\left(H_{\alpha}\right),\lb{w38}
\end{align}
where we used $n_+\left(\cQ_{0,\cL_{\alpha}}\right)=0$, and the fact that there are no crossings to the left of $\lambda_{\infty}$.

By the additivity of the Maslov index under catenation of paths we get
\begin{align}
\begin{split}
&\mi\left((\cK_{t(s)},\cL_{t(s)})|_{s\in\Sigma}\right)=\mi\left((\cK_{t(s)},\cL_{t(s)})|_{s\in\Sigma_1}\right)\lb{w42}\\
&\quad+\mi\left((\cK_{t(s)},\cL_{t(s)})|_{s\in\Sigma_2}\right)+\mi\left((\cK_{t(s)},\cL_{t(s)})|_{s\in\Sigma_3}\right)\\
&\quad+\mi\left((\cK_{t(s)},\cL_{t(s)})|_{s\in\Sigma_4}\right).
\end{split}
\end{align} 
Finally, using $\mi\left((\cK_{t(s)},\cL_{t(s)})|_{s\in\Sigma}\right)=0$ (by homotopy invariance) and $\mi\left((\cK_{t(s)},\cL_{t(s)})|_{s\in\Sigma_4}\right)=0$ (since there are no crossing on $\cP_4$), we arrive at \eqref{3.2}.
\end{proof}
The following result provides an Hadamard-type formula for the derivative of the eigenvalue curves of the operator family $H_t$, $\alpha\leq t\leq \beta$. Formulas of this type have rich history that goes back to \cite{H} and \cite{GS}; further information can be found in \cite{BLC,Gr,Henry} and \cite{LS17}.
The dependence of the eigenvalues of $H_t$ on boundary matrices $(A_t,B_t)$ is discussed in \cite[Theorems 3.1.2 and 3.1.4]{BK}. In particular, it is known from these results that simple eigenvalues and the family of respective eigenfunctions are differentiable with respect to the parameter $t$.
\begin{theorem}\lb{prop3.4}
Assume Hypothesis \ref{3.1} and fix $t_0\in(\alpha, \beta)$. Suppose that $\lambda_{t_0}$ is a simple eigenvalue of $H_{t_0}$ and let  $u_{t_0}$ be the normalized eigenfunction.  Then \begin{equation}\label{3.17}
\frac{d\lambda_t}{dt}\Big|_{t=t_0}= \big\langle (A_{t_0}\dot{B}_{t_0}^*-B_{t_0}\dot{A}_{t_0}^*) \phi _{t_0}, \phi_{t_0}\big\rangle_{L^2(\partial\Gamma)}=\cQ_{t_0,\cK_{\lambda_{t_0}}}(\tr u_{t_0}, \tr u_{t_0}),
\end{equation}
where $\phi_{t_0}:=(A_{t_0}A_{t_0}^*-B_{t_0}B_{t_0}^*)^{-1}(B_{t_0}\gaD u_{t_0}+A_{t_0}\gaN u_{t_0})$ and the derivative with respect to $t$ is denoted by ``dot".
\end{theorem}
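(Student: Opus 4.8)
The plan is to differentiate the identity $A_t\gaD u_t + B_t\gaN u_t = 0$ along the eigenvalue curve and pair the result against an appropriate test vector to isolate $\dot\lambda_{t_0}$, then recognize the resulting expression as the Maslov crossing form. First I would invoke \cite[Theorems 3.1.2, 3.1.4]{BK} to fix differentiable branches $t\mapsto\lambda_t$ and $t\mapsto u_t$ near $t_0$ with $-u_t''+qu_t=\lambda_t u_t$, $\|u_t\|_{L^2(\Gamma)}=1$, and $A_t\gaD u_t+B_t\gaN u_t=0$. Differentiating the eigenvalue equation gives $-\dot u_t''+q\dot u_t-\lambda_t\dot u_t=\dot\lambda_t u_t$. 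Then, using Green's formula \eqref{vv5} with $f=u_{t_0}$, $g=\dot u_{t_0}$ (both in $\hatt H^2(\Gamma)$), I would compute
\[
\dot\lambda_{t_0}=\dot\lambda_{t_0}\|u_{t_0}\|^2=\langle -\dot u_{t_0}''+q\dot u_{t_0}-\lambda_{t_0}\dot u_{t_0},u_{t_0}\rangle_{L^2(\Gamma)}^{\!*}\cdots
\]
more precisely, pairing $\dot\lambda_{t_0}u_{t_0}$ with $u_{t_0}$ and moving derivatives across via \eqref{vv5} reduces the bulk integral to a boundary term $-\langle[J\otimes I_{2|\cE|}]\tr\dot u_{t_0},\tr u_{t_0}\rangle_{\bbC^{4|\cE|}}=\omega(\tr u_{t_0},\tr\dot u_{t_0})$ (the $\lambda_{t_0}$ terms and the symmetric part of the Schr\"odinger operator cancel since $q$ is real and $u_{t_0}$ may be taken real up to phase). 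So $\dot\lambda_{t_0}=\omega(\gaD u_{t_0},\gaN u_{t_0}\,;\,\gaD\dot u_{t_0},\gaN\dot u_{t_0})$, expressed in the notation of \eqref{eq:def_omega}.

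Next I would convert this symplectic boundary pairing into the matrix expression. Write $\tr u_{t_0}=(-B_{t_0}^*\phi_{t_0},A_{t_0}^*\phi_{t_0})$ using Proposition \ref{rem3.2}(ii),(iv), with $\phi_{t_0}=(A_{t_0}A_{t_0}^*-B_{t_0}B_{t_0}^*)^{-1}(B_{t_0}\gaD u_{t_0}+A_{t_0}\gaN u_{t_0})$. Differentiating the constraint $A_t\gaD u_t+B_t\gaN u_t=0$ at $t_0$ yields $A_{t_0}\gaD\dot u_{t_0}+B_{t_0}\gaN\dot u_{t_0}=-(\dot A_{t_0}\gaD u_{t_0}+\dot B_{t_0}\gaN u_{t_0})$, which says $\tr\dot u_{t_0}$ lies in an affine shift of $\cL_{t_0}$; combining this with $\tr\dot u_{t_0}=(\gaD\dot u_{t_0},\gaN\dot u_{t_0})$ and expanding $\omega(\tr u_{t_0},\tr\dot u_{t_0})=\langle\gaN\dot u_{t_0},\gaD u_{t_0}\rangle_{L^2(\partial\Gamma)}-\langle\gaD\dot u_{t_0},\gaN u_{t_0}\rangle_{L^2(\partial\Gamma)}$, I substitute $\gaD u_{t_0}=-B_{t_0}^*\phi_{t_0}$, $\gaN u_{t_0}=A_{t_0}^*\phi_{t_0}$ and use the differentiated constraint together with $A_{t_0}B_{t_0}^*=B_{t_0}A_{t_0}^*$ (Hypothesis \ref{3.1}) and its derivative $\dot A_{t_0}B_{t_0}^*+A_{t_0}\dot B_{t_0}^*=\dot B_{t_0}A_{t_0}^*+B_{t_0}\dot A_{t_0}^*$. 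After collecting terms the $\dot u$-contributions cancel against each other via the constraint and only the $(\dot A_{t_0},\dot B_{t_0})$-terms survive, leaving $\dot\lambda_{t_0}=\langle(A_{t_0}\dot B_{t_0}^*-B_{t_0}\dot A_{t_0}^*)\phi_{t_0},\phi_{t_0}\rangle_{L^2(\partial\Gamma)}$, which is the first claimed equality.

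Finally, for the identification with the crossing form, I would apply Definition \ref{def21}(ii) to the path $\Upsilon: t\mapsto\cL_t$ at the crossing $t_0$ (here $\cZ=\cK_{\lambda_{t_0}}$, and $t_0$ is a crossing by \eqref{3.11}): write $\cL_t=\{w+\cR_t w: w\in\cL_{t_0}\}$ for a $C^1$ family $\cR_t$ with $\cR_{t_0}=0$, take $w=\tr u_{t_0}\in\cL_{t_0}\cap\cK_{\lambda_{t_0}}$, and evaluate $\cQ_{t_0,\cK_{\lambda_{t_0}}}(\tr u_{t_0},\tr u_{t_0})=\tfrac{d}{dt}\omega(\tr u_{t_0},\cR_t\tr u_{t_0})|_{t_0}$. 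One shows $\cR_t\tr u_{t_0}=\tr\dot u_{t_0}+(\text{tangent to }\cL_{t_0})+O(t-t_0)^2$ by matching the first-order Taylor expansion of $\tr u_t$ — which lies in $\cL_t$ — against the graph representation; since $\omega$ annihilates $\cL_{t_0}\times\cL_{t_0}$, the tangential part drops out and $\cQ_{t_0,\cK_{\lambda_{t_0}}}(\tr u_{t_0},\tr u_{t_0})=\tfrac{d}{dt}\omega(\tr u_{t_0},\tr u_t)|_{t_0}=\omega(\tr u_{t_0},\tr\dot u_{t_0})=\dot\lambda_{t_0}$ by the first step. I expect the main obstacle to be the bookkeeping in the middle paragraph: carefully tracking which boundary terms cancel when the differentiated constraint and the symmetry relations $A_{t_0}B_{t_0}^*=B_{t_0}A_{t_0}^*$ (plus its $t$-derivative) are used, and justifying that $\phi_{t_0}$ as defined is the correct "potential" for both $\tr u_{t_0}$ and for reading off the crossing form — this requires keeping the affine (inhomogeneous) piece of the differentiated constraint aligned with the graph coordinates $\cR_t$ rather than with the fibre $\cL_{t_0}$.
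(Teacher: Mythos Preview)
Your strategy is sound and reaches the same conclusion, but the route differs from the paper's in two places.

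For the first equality, you differentiate the \emph{constraint} $A_t\gaD u_t+B_t\gaN u_t=0$ and then substitute $\gaD u_{t_0}=-B_{t_0}^*\phi_{t_0}$, $\gaN u_{t_0}=A_{t_0}^*\phi_{t_0}$; this forces you to invoke the differentiated symmetry $\dot A B^*+A\dot B^*=\dot B A^*+B\dot A^*$ to reconcile $\langle(\dot B A^*-\dot A B^*)\phi,\phi\rangle$ with the stated $\langle(A\dot B^*-B\dot A^*)\phi,\phi\rangle$. The paper instead differentiates the \emph{parametrization} $\tr u_t=(-B_t^*\phi_t,A_t^*\phi_t)$ for $t$ near $t_0$: then $\tr\dot u_{t_0}=(-\dot B_{t_0}^*\phi_{t_0},\dot A_{t_0}^*\phi_{t_0})+(-B_{t_0}^*\dot\phi_{t_0},A_{t_0}^*\dot\phi_{t_0})$, and the second summand lies in the Lagrangian $\cL_{t_0}$ so it is annihilated by $\omega(\,\cdot\,,\tr u_{t_0})$ immediately, with no appeal to the differentiated symmetry. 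Both work; the paper's version makes the cancellation structural rather than algebraic.

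For the crossing form, you reduce to step one by arguing $\dot R_{t_0}\tr u_{t_0}\equiv\tr\dot u_{t_0}\pmod{\cL_{t_0}}$ via the graph decomposition $\tr u_t=w_t+R_tw_t$. This is correct (and more economical), provided you note $w_t=P_{\cL_{t_0}}\tr u_t$ is $C^1$ so the decomposition can be differentiated. The paper instead runs an independent computation: it takes the graph curve $v_t=\tr u_{t_0}+R_t\tr u_{t_0}$, reparametrizes it as $v_t=(-B_t^*f_t,A_t^*f_t)$ via Proposition~\ref{rem3.2}(iv), differentiates, and observes $f_{t_0}=\phi_{t_0}$ by uniqueness.

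One point you should not skip: the paper inserts an extra minus sign in the crossing form, $\mathfrak m_{t_0,\cK_{\lambda_{t_0}}}(v_{t_0},v_{t_0})=-\omega(v_{t_0},\dot v_{t_0})$, and justifies it via Remark~\ref{rem} (the two-path Maslov convention). Your sketch applies Definition~\ref{def21}(ii) literally, which would give the opposite sign; your final answer comes out right only because you also have the arguments of $\omega$ swapped in your formula for $\dot\lambda_{t_0}$ (the paper gets $\dot\lambda_t=\omega(\tr\dot u_t,\tr u_t)$, not $\omega(\tr u_t,\tr\dot u_t)$). When you execute the bookkeeping you flagged, keep these two signs aligned.
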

\begin{proof}
First we compute the derivative of the eigenvalue curve $\lambda_t$.  Since the vector valued function $t\mapsto u_t$ is differentiable near $t_0$ by \cite[Theorem 3.1.2 and 3.1.4]{BK}, we may differentiate the eigenvalue equation $H_tu_t=\lambda_tu_t$ for $t$ sufficiently close to $t_0$, thus obtaining
\begin{equation}\lb{3.18}
-\dot{u}_t''+q\dot{u}_t=\dot{\lambda}_tu_t+\lambda_t\dot{u}_t.
\end{equation}
Next, taking the scalar product of both sides of this equation with $u_t$ yields
\begin{equation}
\langle-\dot{u}_t'',u_t\rangle_{L^2(\Gamma)}+\langle q\dot{u}_t,u_t\rangle_{L^2(\Gamma)}=\dot{\lambda}_t+\lambda_t\langle\dot{u}_t,u_t\rangle_{L^2(\Gamma)}.\no
\end{equation}
Green's formula \eqref{vv5} and \eqref{eq:def_omega} imply
\begin{align}
\dot{\lambda}_t=\langle \dot{u}_t, Hu_t\rangle_{L^2(\Gamma)}-\lambda_t\langle\dot{u}_t,u_t\rangle_{L^2(\Gamma)}+\omega(\tr\dot{u}_t,\tr u_t),
\end{align}
and since $Hu_t=\lambda_tu_t$ we have
\begin{equation}\label{lambdadot}
\dot{\lambda}_t=\omega(\tr\dot{u}_t,\tr u_t).
\end{equation}
Since $\tr u_t=(\gaD u_t, \gaN u_t)\in\cL_t$, by Proposition \ref{rem3.2}(iii) there exists a unique $\phi_t\in L^2(\partial\Gamma)$ such that 
\begin{equation}\lb{3.23}
\tr u_t=(-B_t^*\phi_t, A_t^*\phi_t).
\end{equation}
Solving this equation for $\phi_t$ we have
\begin{equation}\lb{3.23a}
\phi_t=(A_{t}A_{t}^*-B_{t}B_{t}^*)^{-1}(B_{t}\gaD u_{t}+A_{t}\gaN u_{t}),
\end{equation}
and thus the mapping $t\mapsto \phi_t$ is differentiable. Differentiating \eqref{3.23} we obtain 
\begin{equation}\lb{3.24}
\tr\dot{u}_t=\big(-\dot{B}_t^*\phi_t, \dot{A}_t^*\phi_t\big)+\big(-B_t^*\dot{\phi}_t,
A_t^*\dot{\phi}_t\big).
\end{equation}
Plugging this and \eqref{3.23} in \eqref{lambdadot}, using that $\ran(-B_t^*, A_t^*)$ is a Lagrangian plane by Proposition \ref{rem3.2}(ii) and formula \eqref{eq:def_omega} for the symplectic form, we have
\begin{align}\label{lddot}
\frac{d\lambda_t}{dt}\Big|_{t=t_0}&=\omega\big((-\dot{B}_{t_0}^*\phi_{t_0}, \dot{A}_{t_0}^*\phi_{t_0}),
(-B_{t_0}^*\phi_{t_0}, A_{t_0}^*\phi_{t_0})\big)\\
&=\big\langle\dot{A}_{t_0}^*\phi_{t_0},-B_{t_0}^*\phi_{t_0} \big\rangle_{L^2(\partial\Gamma)}-\big\langle-\dot{B}_{t_0}^*\phi_{t_0} , A_{t_0}^*\phi_{t_0}\big\rangle_{L^2(\partial\Gamma)}\nonumber\\
&=\big\langle (A_{t_0}\dot{B}_{t_0}^*-B_{t_0}\dot{A}_{t_0}^*) \phi _{t_0}, \phi_{t_0}\big\rangle_{L^2(\partial\Gamma)},\nonumber
\end{align}
thus completing the proof of the first equality in \eqref{3.17}

Next, we compute the Maslov crossing form. Since $\lambda_{t_0}$ is an eigenvalue of $H_{t_0}$, the point $t_0\in[\alpha,\beta]$ is the conjugate point for the path $\cL_t$ with respect to a reference plane $\cK_{\lambda_{t_0}}$, i.e. $\cL_{t_0}\cap \cK_{\lambda_{t_0}}\not=\{0\}$. Since the map $t\mapsto \cL_t$ is contained in $C^1\big([\alpha,\beta], \Lambda({}^dL^2(\partial\Gamma))\big)$, by \cite[Lemma 3.8]{CJLS} there exists a small neighbourhood  $\Sigma_{t_0}\subset(\alpha,\beta)$ of $t_0$  and a family of operators $R_{t}$ so that the map
\begin{equation}
t\mapsto R_{t}\ \text{is in}\  C^1\big(\Sigma_{t_0}, \cB(\cL_{t_0}, \cL_{t_0}^{\perp})\big),\ R_{t_0}=0,
\end{equation} 
and 
\begin{equation}\lb{w36}
\cL_{t}=\{v+R_{t}v\big| v\in \cL_{t_0} \}\ \text{for all }\ t\in \Sigma_{t_0},
\end{equation}
Let  $v_{t_0}:=\tr u_{t_0}\in \cL_{t_0}$ and consider the family 
\begin{align}
v_t:&=v_{t_0}+R_{t}v_{t_0}\in \cL_{t}\subset {}^dL^2(\partial\Gamma),\ t\in \Sigma_{t_0}.
\end{align}
By definition of the crossing form
\begin{equation}\label{omegav}
\mathfrak{m}_{t_0,K_{\lambda_{t_0}}}(v_{t_0},v_{t_0})=-\frac{d}{dt}
\omega(v_{t_0},v_t)\big|_{t=t_0}=-\omega(v_{t_0},\dot{v}_{t_0}).
\end{equation}
Let us notice that the minus sign in \eqref{omegav} comes from the definition of the Maslov index for two paths as discussed after Theorem \ref{masform}, see Remark \ref{rem}.
Since $v_t\in\cL_{t}$ by construction, due to Proposition \ref{rem3.2} for  $t\in \Sigma_{t_0}$ there exists a unique $f_t\in L^2(\partial\Gamma)$ such that
\begin{equation}\lb{3.31}
v_t=(-B_{t}^*f_{t}, A_t^*f_{t}),
\end{equation}
moreover,
$f_t:=(A_{t}^*A_{t}-B_{t}^*B_{t})^{-1}(B_{t}p_t+A_{t}q_t)$,
where we split $v_t=(p_{t},q_{t})\in L^2(\partial\Gamma)\oplus L^2(\partial\Gamma)$. Therefore the mapping $t\mapsto f_t$ is differentiable in $\Sigma_{t_0}$. Differentiating  \eqref{3.31} yields
\begin{equation}\label{vdot}
\dot{v}=\big(-\dot{B}_t^* f_t,\dot{A}_t^* f_t\big)+\big(-B_t^*\dot{f}_t
,A_t^*\dot{f}_t\big).
\end{equation}
Note that $f_{t_0}=\phi_{t_0}$ due to the uniqueness of the representations \eqref{3.31} and \eqref{3.23}, and because $v_{t_0}=\tr u_{t_0}$.
Plugging \eqref{vdot} and \eqref{3.31} into \eqref{omegav} and using that $\ran(-B_t^*, A_t^*)$ is a Lagrangian plane by Proposition \ref{rem3.2}(ii),  we have
\begin{align*}
\mathfrak{m}_{t_0,K_{\lambda_{t_0}}}(v_{t_0},v_{t_0})&=
-\omega\big((-B_{t_0}^*f_{t_0}, A_{t_0}^*f_{t_0}), (-\dot{B}_t^* f_t,\dot{A}_t^* f_t)\big)\\
&= \omega\big((-\dot{B}_{t_0}^* \phi_{t_0},\dot{A}_{t_0}^* \phi_{t_0}), (-B_{t_0}^*\phi_{t_0}, A_{t_0}^*\phi_{t_0})\big)=\frac{d\lambda_t}{dt}\Big|_{t=t_0},
\end{align*}
where in the last equality we used  \eqref{lddot}. \end{proof}
\begin{remark}
Our assumption about simplicity of $\lambda_{t_0}$ may be removed. If $d:=\dim(\cK_{\lambda_{t_0}})>1$ then $d$ eigenvalue curves cross at $t_0$. An Hadamard-type formula \eqref{3.17} for each of these curves is still valid with $\phi_{t_0}$ replaced by the corresponding normalized basis vector  of $\cK_{\lambda_{t_0}}\cap \cL_{t_0}$. Of course, in this case the  eigenvectors are not necessarily differentiable with respect to $t$. Hence,  \eqref{3.18} cannot be used and an alternative argument is required. Such argument based on analytic perturbation theory was carried out in \cite{LS17} in a different context. 
\end{remark}

To demonstrate an application Theorem \ref{thm3.3} and Theorem \ref{prop3.4}, we discuss a well-known eigenvalue interlacing result for quantum graphs, cf.\ \cite[Theorem 3.1.8]{BK}. Consider the Schr\"odinger operator $H_{t}=-\frac{d^2}{dx^2}+q$ on a star graph $\Gamma$ with a bounded real-valued potential subject to arbitrary self-adjoint vertex conditions at the vertices of degree one, and the following $\delta$-type conditions at the center $v$,
	\begin{equation}\label{BCnu}
	\sum_{e\sim v}{\partial_{n} f_e(v)}=t f(v),\ t\in\bbR,
	\end{equation}
In this case the  boundary matrices describing the vertex conditions (cf. Proposition \ref{rem3.2}) are given by $\wti A\oplus A_{t}$ and $\wti B\oplus B$ where 
	\begin{equation}\no
	A_{t}= \begin{bmatrix}
	1&-1&&\cdots &0\\
	0&1&-1&\cdots&0\\
	&&\ddots\\
    0&&&1&-1\\
	-t&0&\cdots&&0
	\end{bmatrix},\quad  
	B= \begin{bmatrix}
	0&&\cdots &&0\\
	0&&\cdots&&0\\
	&&\ddots\\
	0&&\cdots &&0\\
	1&1&\cdots&&1
	
	\end{bmatrix},
	\end{equation}
	and the matrices $\wti A$ and $\wti B$ correspond to the vertex conditions at $\cV\setminus\{v\}$. Clearly, one has
	\begin{equation}\lb{3.33}
	A_{t}^*B=B^*A_{t}= \begin{bmatrix}
	0&0&\cdots &&0\\
	0&0&\cdots&&0\\
	&&\ddots\\
	0&0&\cdots&&-t
	\end{bmatrix}.\  
	\end{equation}

For $t\in\bbR$, let $\lambda_n(t)$ denote the $n$-th eigenvalue of the Schr\"odinger operator $H_t$ subject to $\delta-$type condition \eqref{BCnu}, and let $\phi_{n, t}$ denote the corresponding eigenfunction of $H_t$. Next, we provide a modification of the classical interlacing inequalities for the eigenvalues of $H_t$ cf., e.g., \cite[Theorem 3.1.8]{BK}, and prove it using the spectral flow formula.
\begin{proposition}
Fix $\nu\in\bbR$ and $n\in\bbN$. Assume that $(\lambda_n(\nu), \phi_{n,\nu})$ is  a simple eigenpair of $H_{\nu}$ and suppose that $\phi_{n, \nu}(v)\not=0$. 
Then for arbitrary $\mu\in\bbR$ and $\theta\in\bbR$ one has
\begin{align}
\begin{split}\lb{3.32new}
&\lambda_{n-1}(\mu)<\lambda_n(\nu)< \lambda_{n+1}(\theta),
\end{split}
\end{align}
In addition, the function $t\mapsto \lambda_{n}(t)$ is strictly monotonically increasing near $\nu$. 
\end{proposition}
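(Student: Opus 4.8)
The plan is to get the monotonicity assertion from the Maslov crossing form via Theorem~\ref{prop3.4}, and to deduce \eqref{3.32new} from the classical interlacing for the decoupled (Dirichlet-at-$v$) operator, using that the two extra hypotheses force $\lambda_n(\nu)$ off the spectrum of that operator.

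\emph{Monotonicity.} The quadratic form of $H_t$ differs from that of $H_0$ by the rank-one term $t\,|f(v)|^2$ on a $t$-independent form domain (equivalently, by Theorem~\ref{thm3.3} together with the crossing form computed below, which is positive semidefinite at every conjugate point), so every eigenvalue branch $t\mapsto\lambda_k(t)$ is nondecreasing. For the strict statement at $\nu$ I would apply Theorem~\ref{prop3.4} with $u_{t_0}=\phi_{n,\nu}$ and $\lambda_{t_0}=\lambda_n(\nu)$. Here the boundary matrices are $\widetilde{A}\oplus A_t$ and $\widetilde{B}\oplus B$, so $\dot{B}_{t_0}=0$, and $\dot{A}_{t_0}^*$ has a single nonzero entry, equal to $-1$, in the slot paired with the $\delta$-row of \eqref{BCnu}; using \eqref{3.33} one checks that $A_{t_0}\dot{B}_{t_0}^*-B_{t_0}\dot{A}_{t_0}^*$ is the rank-one matrix supported at the last diagonal entry of the block at $v$. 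By Proposition~\ref{rem3.2}(iv) (or directly from $\gamma_D u_{t_0}=-B_{t_0}^*\phi_{t_0}$) the corresponding component of the vector $\phi_{t_0}$ in \eqref{3.17} equals $-\phi_{n,\nu}(v)$, so the right-hand side of \eqref{3.17} equals $|\phi_{n,\nu}(v)|^2\big/\|\phi_{n,\nu}\|^2_{L^2(\Gamma)}>0$. Hence $t\mapsto\lambda_n(t)$ is strictly increasing near $\nu$, which is the last claim of the proposition.

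\emph{Classical interlacing.} Let $H_\infty$ be the self-adjoint operator with the prescribed conditions at the vertices of degree one and the Dirichlet condition $f(v)=0$ at the center. On a star graph $H_\infty$ decouples into Schr\"odinger operators $h_{e}$ on the individual edges; write $\mu_1\le\mu_2\le\cdots$ for its (discrete, bounded below) spectrum. The form domain of $H_\infty$ is the codimension-one subspace $\{f(v)=0\}$ of the form domain of each $H_t$, on which the two forms agree, so the min--max principle yields $\mu_{n-1}\le\lambda_n(t)\le\mu_n$ for every $t\in\mathbb R$ (cf.\ \cite[Theorem~3.1.8]{BK}).

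\emph{Strictness and conclusion.} I claim $\lambda_n(\nu)\notin\spec(H_\infty)$. Indeed, if $\lambda_n(\nu)\in\spec(H_\infty)=\bigcup_e\spec(h_e)$, choose an edge $e_0$ with $\lambda_n(\nu)\in\spec(h_{e_0})$; since the solutions on $e_0$ of $-\psi''+q\psi=\lambda_n(\nu)\psi$ obeying the condition at the pendant vertex of $e_0$ form a one-dimensional space, that space coincides with the eigenspace of $h_{e_0}$ and hence every such solution vanishes at $v$. Consequently any eigenfunction $g$ of $H_\nu$ at $\lambda_n(\nu)$ has $g_{e_0}(v)=0$, so $g(v)=0$ by continuity at $v$; in particular $\phi_{n,\nu}(v)=0$, contradicting the hypothesis. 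Thus $\lambda_n(\nu)\notin\{\mu_j\}_j$, and with $\mu_{n-1}\le\lambda_n(\nu)\le\mu_n$ this gives $\mu_{n-1}<\lambda_n(\nu)<\mu_n$. Finally, for arbitrary $\mu,\theta\in\mathbb R$,
\[
\lambda_{n-1}(\mu)\le\mu_{n-1}<\lambda_n(\nu)<\mu_n\le\lambda_{n+1}(\theta),
\]
which is \eqref{3.32new}. The points needing care are the identification in \eqref{3.17} of the relevant component of $\phi_{t_0}$ with $-\phi_{n,\nu}(v)$, and the observation that $\phi_{n,\nu}(v)\neq0$ forces $\lambda_n(\nu)$ off $\spec(H_\infty)$ — this is precisely where both hypotheses are used.
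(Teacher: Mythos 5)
Your argument is correct, and the monotonicity half coincides with the paper's: both apply Theorem \ref{prop3.4} with the matrices $\wti A\oplus A_t$, $\wti B\oplus B$, note that $A_{t_0}\dot B_{t_0}^*-B_{t_0}\dot A_{t_0}^*$ is the rank-one matrix with a single entry $+1$ at the diagonal slot of the $\delta$-row, and identify the relevant component of $\phi_{t_0}$ with $-\phi_{n,\nu}(v)$ via $\gaD u_{t_0}=-B_{t_0}^*\phi_{t_0}$, giving $\lambda_n'(\nu)=|\phi_{n,\nu}(v)|^2>0$ (the paper writes $|\phi_{n,\nu}(v)|$, evidently omitting the square). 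For the inequalities \eqref{3.32new}, however, you take a genuinely different route. The paper deliberately proves them \emph{with the spectral flow formula}: it shows $\lambda_n(\nu)\notin\spec(H_t)$ for $t\neq\nu$ (equation \eqref{3.35}) by an averaging trick --- if $\lambda_n(\nu)=\lambda_k(\tau)$, a suitable combination of the two eigenfunctions is an eigenfunction for the parameter $(\tau+\nu)/2$, producing crossings accumulating at $\nu$ and contradicting the regularity of the crossing at $\nu$ (\cite[Corollary 3.25]{F04}) --- and then uses Theorem \ref{thm3.3} together with \eqref{ab17} to compute $\mo(H_t-\lambda_n(\nu))\in\{n-1,n\}$ for all $t$, from which \eqref{3.32new} follows. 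You instead compare with the operator $H_\infty$ obtained by imposing the Dirichlet condition at the center: its form domain is the codimension-one subspace $\{f(v)=0\}$ on which all the forms of $H_t$ agree, so min--max gives $\lambda_k(t)\le\mu_k\le\lambda_{k+1}(t)$, and the hypothesis $\phi_{n,\nu}(v)\neq0$ excludes $\lambda_n(\nu)$ from $\spec(H_\infty)$ by the one-dimensionality of the solution space on each edge satisfying the pendant condition (this step tacitly uses that the conditions encoded by $\wti A,\wti B$ are local at each degree-one vertex, which is the intended setting). Your argument is more elementary and essentially the classical proof in the spirit of \cite[Theorem 3.1.8]{BK}, while bypassing Theorem \ref{thm3.3} entirely; the paper's argument is less elementary but showcases the Maslov-index machinery and yields slightly more, namely the global statement \eqref{3.35} that the level $\lambda_n(\nu)$ is attained by no $H_t$ with $t\neq\nu$, not just the interlacing bounds. (Your strictness claim $\mu_{n-1}<\lambda_n(\nu)<\mu_n$ plus global monotonicity of the branches would recover \eqref{3.35}, but you do not need it.)
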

\begin{proof}
First, we notice that \eqref{3.17}, \eqref{3.33}  $\nu$ yield
\begin{equation}\lb{3.31new}
\lambda'_n(\nu)=|\phi_{n, \nu}(v)|>0.
\end{equation} 
Hence, the function $t\mapsto \lambda_{n}(t)$ is strictly monotone near $\nu$.

Heuristically, \eqref{3.32new} follows from the fact that the spectral flow through $\lambda_n(\nu)$ is equal to one. That is, the families of eigenvalues $\{\lambda_{n\pm1}(t)\}_{t\in\bbR}$ do not cross $\lambda_n(\nu)$. 

Let us now provide a rigorous proof. First, we claim that $\nu$ is a unique crossing point on the line $\lambda=\lambda_{n}(\nu)$, that is, 
\begin{equation}\lb{3.35}
\lambda_n(\nu)\not \in \spec( H_t),\  t\not=\nu.
\end{equation}
Seeking a contradiction, we assume  that $\lambda_n(\nu)=\lambda_k(\tau) \in \spec( H_{\tau})$ for some $\tau\not=\nu$, $k\in\bbN$, and denote the corresponding eigenfunction by $\phi_{k,\tau}$. We will show that $\displaystyle (\tau+\nu)/2$ is a also a crossing, in other words, 
\begin{equation}\lb{3.36}
\lambda_n(\nu)\in\spec\left(H_{\frac{\tau+\nu}{2}}\right).
\end{equation}
To that end we define a function
\begin{equation}\no
\Phi:= \frac{1}{2}\left(\Big(\frac{\phi_{k,\tau}(v)}{\phi_{n,\nu}(v)}\Big)\phi_{n,\nu}+\phi_{k,\tau} \right),
\end{equation}
and notice that
\begin{equation}
\sum_{e\sim v}{\partial_{n} \Phi(v)}=\frac{\tau+\nu}2 \Phi(v).
\end{equation}
In addition, since $\lambda_n(\nu)=\lambda_k(\tau)$, one has $-\Phi''+q\Phi=\lambda_n(\nu) \Phi$. Thus \eqref{3.36} holds true.
Repeating this procedure one can produce a sequence of positive crossings converging to $\nu$. However, existence of such a sequence contradicts the fact that $\nu$ is a regular crossing (cf. \cite[Corollary 3.25]{F04}). Hence, $\nu$ is a unique crossing on the line $\lambda=\lambda_{n}(\nu)$ as asserted. 

Let us fix an arbitrary $\varkappa>0$ and recall 
the Lagrangian planes $\cL_t, \cK_{\lambda}$ from \eqref{3.5}. Then  Theorem \ref{thm3.3} yields
\begin{equation}\lb{3.38}
\mo(H_{\nu}-\lambda_{n}(\nu))-\mo(H_{\nu+\varkappa}-\lambda_{n}(\nu))=\mi(\{\cL_t, \cK_{\lambda_{n}(\nu)}\}_{t=\nu}^{\nu+\varkappa}).
\end{equation}
Since $\nu$ is a positive crossing it does not contribute to the Maslov index of the path $\{\cL_t, \cK_{\lambda_{n}(\nu)}\}_{t=\nu}^{\nu+\varkappa}$ according to \eqref{ab17}. Furthermore, as we have shown earlier $\nu$ is a unique crossing, thus the Maslov index of this path is equal to zero. That is, combining  \eqref{ab17}, \eqref{3.17}, \eqref{3.31new} we obtain
\begin{equation}\lb{3.39}
\mi(\cL_t, \cK_{\lambda_{n}(\nu)})_{t=\nu}^{\nu+\varkappa}=0.
\end{equation}
Next, $\mo(H_{\nu}-\lambda_{n}(\nu))=n-1$ since $\lambda_{n}(\nu)$ is the $n-th$ eigenvalue of $H_{\nu}$. Thus by \eqref{3.38}, \eqref{3.39} one has
\begin{equation}\lb{3.40}
\mo(H_{\nu+\varkappa}-\lambda_{n}(\nu))=n-1,
\end{equation}
Similarly, using \eqref{ab17}, \eqref{3.17}, \eqref{3.31new} we compute the Maslov index of the path $\{\cL_t,\cK_{\lambda_{n}(\nu)}\}_{t=\nu-\varkappa}^{\nu}$ and the corresponding Morse indices as follows
\begin{align}
\mi(\cL_t, \cK_{\lambda_{n}(\nu)})_{t=\nu-\varkappa}^{\nu}&=1,\no \\
\mo(H_{\nu-\varkappa}-\lambda_{n}(\nu))-\mo(H_{\nu}-\lambda_{n}(\nu))&=\mi(\{\cL_t, \cK_{\lambda_{n}(\nu)}\}_{t=\nu-\varkappa}^{\nu}),
\end{align}
hence, 
\begin{equation}\lb{3.42}
\mo(H_{\nu-\varkappa}-\lambda_{n}(\nu))=n.
\end{equation}
To summarize, \eqref{3.40} and \eqref{3.42} yield
\begin{align}\lb{3.43}
&\#\{j\in\bbN: \lambda_{j}(t)< \lambda_{n}(\nu)\}\in\{n-1, n\}, \text{\ for all\ }t\in\bbR.
\end{align}
We are now ready to prove \eqref{3.32new}. Suppose that $\lambda_{n-1}(\mu)\geq\lambda_n(\nu)$ for some $\mu\in\bbR$. Then  $\lambda_{n-1}(\mu)\not=\lambda_n(\nu)$ by \eqref{3.35}, hence, 
\begin{equation}\no
\#\{j\in\bbN: \lambda_{j}(\mu)< \lambda_{n}(\nu)\}\leq n-2,
\end{equation}
which contradicts \eqref{3.43}. Likewise, assuming that $\lambda_n(\nu)\geq \lambda_{n+1}(\theta)$ for some $\theta\in\bbR$ we arrive at 
\begin{equation}\no
\#\{j\in\bbN: \lambda_{j}(\theta)< \lambda_{n}(\nu)\}\geq n+1,
\end{equation}
which again contradicts \eqref{3.43}.
\end{proof}


\begin{thebibliography}{xxxxx}

\bi[AS]{AS80} A.\ Alonso, B.\ Simon, {\it The Birman-Krein-Vishik theory
	of selfadjoint extensions of semibounded operators}, J.\ Operator Th.\ 
{\bf 4}, 251--270 (1980); Addenda: {\bf 6}, 407 (1981).	
%
\bibitem[A67]{arnold67}  V. I. Arnold,
{\em Characteristic classes entering in quantization conditions,}
Func. Anal. Appl. {\bf 1} (1967), 1--14.
%
\bibitem[A85]{Arn85} V. I.  Arnold,
{\em Sturm theorems and symplectic geometry,}
 Func. Anal. Appl. {\bf 19} (1985), 1--10.
%
\bibitem[APS]{APS} M.\ F.\ Atiyah, V.\ K.\ Patodi, and I.\ M.\ Singer,  
{\em Spectral asymmetry and Riemannian geometry III,} 
Math. Proc. Cambridge Philos. Soc. {\bf 79} (1976), 71--99.
%
%
\bi[BCJLMS]{BCJLMS} M.\ Beck, G.\  Cox, C.\ Jones, Y.\ Latushkin, K.\ McQuighan, A. Sukhtayev, {\it Instability of pulses in gradient reaction-diffusion systems: a symplectic approach}, Philos. Trans. Roy. Soc. A 376 no. 2117 (2018).
%
\bi[BK]{BK} B.\  Berkolaiko,\ P.\ Kuchment,  {\em Introduction to Quantum Graphs}, {\it Math. Surv.  Monog.} \bfi{186} AMS, 2013.	
%
\bi[BF]{BbF95} B.\ Booss-Bavnbek,\ K.\  Furutani,\ {\it The Maslov Index: a functional analytical definition and the spectral flow formula}, Tokyo J.\ Math.\ {\bf 21} (1998), 1--34.
%
\bibitem[BW]{BW93} B.\ Booss-Bavnbek, K.\ Wojciechowski,
{\em Elliptic Boundary Problems for Dirac Operators,} 
 Birkh\"auser, Boston, MA, 1993.
%
\bibitem[BZ1]{BZ3} B.\ Booss-Bavnbek, C.\ Zhu,
{\em General spectral flow formula for fixed maximal domain,} 
 Cent. Eur. J. Math.  {\bf 3} (2005), 558-577. 
%
\bibitem[BZ2]{BZ1} B.\ Booss-Bavnbek, C.\ Zhu,
{\em The Maslov Index in Symplectic Banach Spaces,} 
Memoirs Amer.\ Math.\ Soc.\ {\bf 252}, AMS, Providence, RI, 2018.
%
\bibitem[BZ3]{BZ2} B.\ Booss-Bavnbek, C.\ Zhu,
{\em The Maslov index in weak symplectic functional analysis,} 
Ann. Global Anal. Geom. {\bf 44} (2013), 283-318. 
%
\bibitem[BLC]{BLC}
V.\ I.\ Burenkov, P.\ D.\ Lamberti, and M. Lanza de Cristoforis, \textit{Spectral stability of nonnegative selfadjoint operators}. Sovrem. Mat. Fundam. Napravl. {\bf 15} (2006), 76--111; translation in J. Math. Sci. (N. Y.) {\bf 149} (2008), 1417--1452.
%
\bi[Fr]{Fr} L.\ Friedlander, {\em Some inequalities between Dirichlet and Neumann eigenvalues,} Arch. Rational Mech. Anal., {\bf 116} (1991), 153--160.
%
\bi[CJLS]{CJLS} G.\ Cox,\ C.\ K.\ R.\ T.\ Jones, Y.\ Latushkin, and\ A.\ Sukhtayev,\  {\em The Morse and Maslov indices for multidimentional Schr\"{o}dinger operators with matrix valued potential},  Trans. Amer. Math. Soc. {\bf 368} (2016), 8145--8207. 
%
\bi[CJM1]{CJM1} G.\ Cox,\ C.\ K.\ R.\ T.\ Jones,  and J.\ Marzuola,\  {\em A Morse index theorem for elliptic operators on bounded domains}, Comm.\ Partial Diff.\ Eqns.\ {\bf 40} (2015), 1467--1497.
%
\bi[CJM2]{CJM2} G.\ Cox, C.\ K.\ R.\ T.\ Jones, and J.\ Marzuola,\  {\em Manifold decompositions and indices of Schr\"odinger operators}, Indiana Univ.\ Math.\ J., {\bf 66} (2017),  1573--1602.
%
\bi[DJ]{DJ11} J.\ Deng, C.\ Jones, {\em Multi-dimensional Morse Index Theorems and a symplectic view of elliptic boundary value problems,}
Trans. Amer. Math. Soc. {\bf 363} (2011), 1487--1508.
%
\bi[F]{F04} K.\ Furutani, {\em Fredholm-Lagrangian-Grassmannian and the Maslov index,} J. Geom. Phys. {\bf 51} (2004), 269--331.
%
\bibitem[GS]{GS}
P. Garabedian and M. Schiffer, \textit{Convexity of domain functionals}, J.\ D'Analyse Math.\
{\bf 2} (1952--53), 281--368.
%
\bibitem[GG]{GG} V.\ I.\ Gorbachuk,\ M.\ L.\ Gorbachuk, {\em Boundary Value Problems for Operator Differential Equations}, Kluwer Academic Publ., Dordrecht, 1991.
%
\bibitem[G]{Gr} P.\ Grinfeld, \textit{Hadamar's formula inside and out},
J.\ Optim. Theory Appl. {\bf 146} (2010), 654--690.
%
\bibitem[H]{H}
J. Hadamard (1908), {\em M\'{e}moire sur le probl\`{e}me d'analyse relatif \`{a}  l'\'{e}quilibre
des plaques \'{e}lastiques encastr\'{e}es}, Ouvres de J. Hadamard {\bf 2} ed. C.N.R.S. Paris (1968).
%
\bi[Ha]{Ha00} M.\ Harmer, {\em Hermitian symplectic geometry and extension theory}, J. Phys. A: Math.Gen {\bf 33}, 9193--9203 (2000).
\bi[HS]{HS} P.\ Howard, A.\ Sukhtayev, {\em The Maslov and Morse indices for Schr\"odinger operators on $[0,1]$}, J. Diff. Eq., {\bf 260}, (2016) 4499--4549.
%
\bibitem[He]{Henry} D.\ Henry,
{\em Perturbation of the boundary in boundary-value problems of partial differential equations},   London Math.\ Soc.\ Lecture Notes Ser.\ {\bf 318}, Cambridge Univ.\ Press, Cambridge, 2005.
%
\bi[HS18]{HS18} P.\ Howard, A.\ Sukhtayev, {\em Renormalized oscillation theory for linear Hamiltonian systems on $[0,1]$ via the Maslov index }, preprint,arXiv:1808.08264
%
\bi[HLS1]{HLS} P.\ Howard, Y.\ Latushkin, and A.\ Sukhtayev, {\em The Maslov index for Lagrangian pairs on $\bbR^{2n}$}, {\em  J.\ Math.\ Anal.\ Appl.}\, {\bf 451}  (2017) 794--821.
%
\bi[JLS]{JLS} C.\ K.\ R.\ T.\ Jones, Y.\ Latushkin,\ S.\ Sukhtaiev,\  {\ Counting spectrum via the  Maslov index for one dimensional $\theta-$periodic Schr\"odinger operators}, Proc. AMS {\bf 145} (2017), 363--377. 
%
\bi[KS3]{KS06} V.\ Kostrykin, R.\ Schrader, {\em Laplacians on metric graphs: eigenvalues, resolvents and semigroups}. In {\it Quantum graphs and their applications}, volume 415 of {\it Contemp. Math.,} Amer. Math. Soc., Providence, RI, 2006, pp. 201--225.
%
\bi[Kr]{Kr47} M.\ G.\ Krein, {\it The theory of self-adjoint extensions of
	semi-bounded Hermitian transformations and its applications. I},
Mat.\ Sbornik {\bf 20}, 431--495 (1947) (Russian). 
%
\bibitem[LS17]{LS17} Y.\ Latushkin, A.\ Sukhtayev  {\it Hadamard-type formulas via the Maslov form}, J. Evol. Equ. {\bf 17}, (2017) 443--472.
%
\bi[LS18]{LS1} Y.\ Latushkin,\ S.\ Sukhtaiev, {\it The Maslov index and the spectra of second order differential operators,} Adv. Math {\bf 329}, 422--486 (2018). 
%
\bibitem[LSS]{LSS} Y.\ Latushkin, S.\ Sukhtaiev,  and A.\ Sukhtayev, {\it The Morse and Maslov indices for Schr\"odinger operators}, J.\ D'Analyse Math. {\bf 135},  (2018) 345--387.
%
\bibitem[McS]{McS} D.\ McDuff, D.\ Salamon {\em Introduction to Symplectic Topology. Second Edition},  Clarendon Press, Oxford, 1998.
%
\bibitem[Pa]{Pa} K.\ Pankrashkin,
{\em Resolvents of self-adjoint extensions with mixed boundary conditions}, Rep. Math. Phys. {\bf 58} (2006), 207--221.

\bibitem[Vi]{Vi} M.\ I.\  Vishik, {\em On general boundary value problems for elliptic differential operators,} 
Trudy Mosc. Mat. Obsv. {\bf 1}, (1952) 187--246, (English translation in Amer. Math. Soc.
Transl. {\bf 24}, (1963) 107--172.)
%
\end{thebibliography}
\end{document}